\numberwithin{equation}{section}
\theoremstyle{plain}
\newtheorem{thm}{Theorem}[section]
\newtheorem{cor}[thm]{Corollary}
\newtheorem{prop}[thm]{Proposition}
\theoremstyle{definition}
\newtheorem{remark[thm]}{Remark}
\def\cat{\protect\operatorname{cat}}
\def\Z{{\mathbb Z}}
\def\R{{\mathbb R}}
\def\bA{{\mathbb A}}
\def\1{\hbox{\rm\rlap {1}\hskip.03in{\rom I}}}
\def\Bbbone{{\rm1\mathchoice{\kern-0.25em}{\kern-0.25em}
{\kern-0.2em}{\kern-0.2em}I}}
\long\def\forget#1\forgotten{} %
\begin{document}

\title[On homotopy cofiber ]
{The topological complexity and the homotopy cofiber of the diagonal map  for non-orientable surfaces}
\author[A.~Dranishnikov]
{Alexander Dranishnikov}
\address{A. Dranishnikov, Department of Mathematics, University
of Florida, 358 Little Hall, Gainesville, FL 32611-8105, USA}
\email{dranish@math.ufl.edu}
\thanks{Supported by NSF, grant DMS-1304627}

\begin{abstract} We show that the Lusternik-Schnirelmann category of the homotopy cofiber of the diagonal map
of non-orientable surfaces equals three.

Also, we prove that the topological complexity of non-orientable surfaces of genus $\ge 4$ is four. 
\end{abstract}

\maketitle

\section{Introduction}

 The {\em topological complexity}  $TC(X)$ of a space $X$ was defined by Farber~\cite{F} as an invariant that measures
the navigation complexity of $X$ regarded as the configuration space for a robot motion planning. By a slightly modified definition $TC(X)$   is
the minimal number $k$ 
such that $X\times X$ admits a cover by $k+1$ open sets  $U_0,\dots, U_k$ such that over each $U_i$ there is 
a continuous  motion planning algorithm $s_i:U_i\to PX$, i.e. a continuous map of $U_i$ to the path space  $PX=X^{[0,1]}$ with
$s_i(x,y)(0)=x$ and $s_i(x,y)(1)=y$ for all $(x,y)\in U_i$. Here we have defined the reduced topological complexity. 
The original (nonreduced) topological complexity is by one larger.

The topological complexity of an orientable surface of genus $g$ was computed in~\cite{F}: $$TC(\Sigma_g)=\begin{cases}2, &\text{if}\ g=0,1\\
                                                              4 &  \text{if}\ g>1.\end{cases}$$

For the non-orientable surfaces of genus $g>1$ the complete answer is still unknown. What was known are the bounds:
$3\le TC(N_g)\le 4$ and the equality $TC(\R P^2)=3$. In this paper we show that  $TC(N_g)=4$ for $g\ge 4$. The cases of $g=2$ and $g=3$ are still open.

The topological complexity is a numeric invariant of topological  spaces similar to the Lusternik-Schnirelmann category.
It is unclear if $TC$ can be completely reduced to the LS-category. One attempt of such reduction (\cite{GV2},~\cite{Dr2}) deals with the 
problem whether the topological complexity $TC(X)$ coincides with the Lusternik-Schnirelmann category
$\cat (C_{\Delta X})$ of the homotopy cofiber of the diagonal map $\Delta: X\to X\times X$,
$C_{\Delta X}=(X\times X)/\Delta X$. The coincidence of 
these two concepts was proven in~\cite{GV2} for a large class of spaces. Also
in~\cite{GV1} the equality was proven for the weak 
in the sense of Berstein and Hilton versions of $TC$ and $\cat$.

In this paper we prove that $\cat(C_{\Delta N})=3$ for any non-orientable surface $N$. Thus, in view of the computation
$TC(N_g)=4$ for $g>4$ we obtain counterexamples to the conjecture  $TC(X)=\cat(C_{\Delta X})$.

Since both computations are rather technical, at the end of the paper we present a short counterexample: Higman's group.
We show that $TC(BH)\ne\cat(C_{\Delta BH})$  where $BH=K(H,1)$ is the classifying space for Higman's group $H$. 
The proof of that is short because the main difficulty there, the proof of the equality $TC(BH)=4$, is hidden behind the reference~\cite{GLO}.

The author is thankful to Mark Grant and Jesus Gonzalez for pointing out on mistakes in early versions of this paper.

\section{Preliminaries}

\subsection{Category of spaces}
By the definition the Lusternik-Schnirelmann category $\cat X\le k$ for a topological space $X$ if there is 
a cover $X=U_0\cup\dots\cup U_k$ by $k+1$ open subsets each of which is contractible in $X$.

Let $\pi=\pi_1(X)$. We recall that the cup product $\alpha\smile\beta$ of twisted cohomology classes $\alpha\in H^i(X;L)$ and $\beta\in H^j(X;M)$ takes value in $H^{i+j}(X;L\otimes M)$ where $L$ and $M$ are $\pi$-modules and $L\otimes M$ is the tensor product over $\Z$~\cite{Bro}. Then the cup-length of $X$, denoted as $c.l.(X)$, is defined as the maximal integer $k$ such that $\alpha_1\smile\dots\smile\alpha_k\ne 0$ for some $\alpha_i\in H^{n_i}(X;L_i)$ with $n_i>0$.
The following inequalities give  estimates on the LS-category~\cite{CLOT}:
\begin{thm}\label{cl}
$c.l.(X)\le\cat X\le\dim X.$

If $X$ is $k$-connected, then $\cat X\le\dim X/(k+1)$.
\end{thm}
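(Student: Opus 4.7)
The plan is to establish each of the three bounds separately, all via classical arguments from Lusternik--Schnirelmann theory.

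For the cup-length inequality $c.l.(X)\le\cat X$, suppose $\cat X\le k$ with cover $X=U_0\cup\cdots\cup U_k$ and each inclusion $U_i\hookrightarrow X$ nullhomotopic. Given classes $\alpha_i\in H^{n_i}(X;L_i)$ with $n_i\ge 1$, the nullhomotopy forces $\alpha_i|_{U_i}=0$, so by the long exact sequence of the pair $(X,U_i)$ each $\alpha_i$ lifts to a relative class $\tilde\alpha_i\in H^{n_i}(X,U_i;L_i)$. The relative cup product assembles these into a class
\[
\tilde\alpha_1\smile\cdots\smile\tilde\alpha_{k+1}\in H^{n_1+\cdots+n_{k+1}}\bigl(X,\,U_0\cup\cdots\cup U_k;\,L_1\otimes\cdots\otimes L_{k+1}\bigr),
\]
and since $U_0\cup\cdots\cup U_k=X$ this relative cohomology group vanishes. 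The image of this class under the natural map to absolute cohomology is exactly $\alpha_1\smile\cdots\smile\alpha_{k+1}$, so any product of more than $\cat X$ positive-degree classes is zero.

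For the dimension upper bound $\cat X\le\dim X$, realize $X$ as a CW complex of dimension $n$. The standard construction produces an open cover $V_0,\dots,V_n$ where $V_i$ is a thickening of one interior point chosen in each open $i$-cell; each $V_i$ deformation retracts onto a discrete subset of $X$, and if $X$ is path-connected this discrete set (and hence $V_i$) is nullhomotopic in $X$.

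For the connectivity refinement $\cat X\le\dim X/(k+1)$, use $k$-connectedness to replace $X$ by a homotopy-equivalent CW model with a single $0$-cell and no cells in dimensions $1,\dots,k$. Then group the remaining cells into blocks of $k+1$ consecutive dimensions and assemble the cover above block by block, producing at most $\lfloor n/(k+1)\rfloor+1$ open sets. The main technical obstacle is verifying that each block's thickening is still contractible in $X$: this is where $k$-connectedness is essential, since one must homotope the attaching maps across the $k+1$ dimensions of a block so that the entire block deformation retracts to a point inside $X$. The clean inductive form of this argument (via Ganea's construction or categorical sequences) is precisely what CLOT carries out, and I would invoke it rather than reprove the combinatorics.
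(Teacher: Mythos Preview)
The paper does not give a proof of this theorem at all; it is stated as a standard result with a citation to \cite{CLOT}. Your sketch of the three classical arguments (relative cup product for the lower bound, the open-cell thickening cover for the dimension bound, and the cell-grouping argument for the connectivity refinement) is correct and is exactly what one finds in that reference, so your proposal is consistent with the paper's treatment.
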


\subsection{Category of maps}
We recall that the LS-category of a map $f:Y\to X$ is the least integer $k$ such that $Y$ can be covered by
$k+1$ open sets $U_0,\dots, U_k$ such that the restrictions $f|_{U_i}$ are null-homotopic for all $i$.

The following two facts are proven in~\cite{Dr3} (Proposition 4.3 and Theorem 4.4):
\begin{thm}\label{deform}
Let $u:X\to B\pi$ be a map classifying the universal covering of a CW complex $X$. Then the following are equivalent:

(1) $\cat(u)\le k$; 

(2)  $u$ is homotopic to a map $f:X\to B\pi$ with $f(X)\subset B\pi^{(k)}$.
\end{thm}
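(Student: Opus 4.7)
The plan is to prove the two directions separately.

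For $(2)\Rightarrow(1)$ (the easy direction), note that $B\pi^{(k)}$ is a CW complex of dimension at most $k$, so the dimension bound in \theoref{cl} gives $\cat(B\pi^{(k)})\le k$. Pick an open cover $V_0,\dots,V_k$ of $B\pi^{(k)}$ with each $V_j$ null-homotopic in $B\pi^{(k)}$. The preimages $U_j:=f^{-1}(V_j)$ form an open cover of $X$ over which $f$ is null-homotopic in $B\pi^{(k)}$, hence null-homotopic in $B\pi$. Thus $\cat(f)\le k$, and since $u\simeq f$, also $\cat(u)\le k$.

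For $(1)\Rightarrow(2)$ (the substantive direction), the plan is to reduce to Schwarz's theorem on the genus of a principal bundle. Regard the universal cover $p:\widetilde X\to X$ as a principal $\pi$-bundle classified by $u$. First observe that $\cat(u)$ equals the Schwarz genus of $p$: over an open $U\subset X$, a null-homotopy of $u|_U$ corresponds to a lift of $u|_U$ to $E\pi$, equivalently to a trivialization of $p|_U$, equivalently to a section of $p$ over $U$. So an open cover realizing $\cat(u)\le k$ is the same data as an open cover realizing $\mathrm{genus}(p)\le k$. Schwarz's theorem then says $\mathrm{genus}(p)\le k$ iff $u$ factors, up to homotopy, through Milnor's $(k+1)$-fold join quotient $\pi^{*(k+1)}/\pi$. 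Because $\pi$ is discrete, $\pi^{*(k+1)}$ is a $k$-dimensional simplicial complex with a free simplicial $\pi$-action, so $\pi^{*(k+1)}/\pi$ is a $k$-dimensional CW complex; in the Milnor model $B\pi=\pi^{*\infty}/\pi$ it is precisely the $k$-skeleton. A cellular approximation then transfers the resulting factorization into the $k$-skeleton of the prescribed CW structure on $B\pi$, yielding the desired $f$ homotopic to $u$ with $f(X)\subset B\pi^{(k)}$.

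The main obstacle is Schwarz's theorem itself; a self-contained proof proceeds by hand via a partition of unity. Given the cover $U_0,\dots,U_k$ with null-homotopies $u|_{U_i}\simeq\ast$, lift each to a section $s_i:U_i\to\widetilde X$ of $p$, and choose a partition of unity $\{\phi_i\}$ subordinate to $\{U_i\}$. For each $x\in X$, the points $s_i(x)$ (for those $i$ with $\phi_i(x)>0$) lie in the $\pi$-torsor fiber $p^{-1}(x)$, so after identifying this fiber with $\pi$ via a local trivialization, the formal barycentric combination $\sum_i\phi_i(x)\,s_i(x)$ defines a $\pi$-equivariant continuous map into Milnor's $(k+1)$-fold join $\pi^{*(k+1)}$. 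Passing to the quotient by $\pi$ and composing with the inclusion into $B\pi$ gives a map $f:X\to B\pi$ whose image lies in $B\pi^{(k)}$ after cellular approximation, and $f\simeq u$ because both maps classify the same principal bundle $p$.
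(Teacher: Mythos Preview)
The paper does not give its own proof of this theorem: it is imported verbatim from \cite{Dr3} (Proposition~4.3 there), so there is nothing to compare against. Your proposal is thus a genuine addition rather than a reconstruction.

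Your argument is correct. The $(2)\Rightarrow(1)$ direction is clean. For $(1)\Rightarrow(2)$, the identification of $\cat(u)+1$ with the Schwarz genus of the universal cover $p:\widetilde X\to X$ is right, and the passage through the $(k{+}1)$-fold fiberwise join is exactly Schwarz's Theorem~3 (quoted in the paper as Theorem~\ref{Schwarz}). One phrasing could be sharpened: the clause ``after identifying this fiber with $\pi$ via a local trivialization'' suggests a local-to-global patching that you do not actually carry out. It is cleaner to say that the partition-of-unity formula $x\mapsto\sum_i\phi_i(x)\,s_i(x)$ is, on the nose, a section of the fiberwise join $\ast_X^{k+1}\widetilde X=\widetilde X\times_\pi\pi^{*(k+1)}$, and that a section of this associated bundle is the same data as a $\pi$-equivariant map $\widetilde X\to\pi^{*(k+1)}$, which then descends to the desired $X\to\pi^{*(k+1)}/\pi$. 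The final cellular-approximation step is best applied to the classifying map $\pi^{*(k+1)}/\pi\to B\pi$ (a map out of a $k$-dimensional complex), which pushes the factorization into the $k$-skeleton of whatever CW structure on $B\pi$ is given; you state this correctly but a reader might misread it as cellular approximation applied to $X$.
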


\begin{thm}\label{space-map} 
Let $X$ be an $n$-dimensional CW complex whose universal covering $\tilde X$  is $(n-k)$-connected. Suppose that $X$ admits a classifying map $u:X\to B\pi$ with $\cat u\le k$.  Then
$
\cat X\le k.
$
\end{thm}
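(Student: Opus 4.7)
The plan is to combine Theorem~\ref{deform} with a pull-back of a categorical cover, using the $(n-k)$-connectivity of $\tilde X$ to kill the obstructions to null-homotoping each piece.

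First, apply Theorem~\ref{deform} to homotope $u$ so that $u(X)\subseteq B\pi^{(k)}$. Since $\dim B\pi^{(k)}=k$, Theorem~\ref{cl} yields $\cat B\pi^{(k)}\le k$, so one picks a categorical open cover $V_0,\dots,V_k$ of $B\pi^{(k)}$ by contractible sets; for what follows it is convenient to take each $V_j$ as a regular neighborhood of the union of open $j$-cells, so that $V_j$ deformation retracts onto the $0$-dimensional set $B_j$ of their barycenters. Setting $U_j:=u^{-1}(V_j)$ then gives an open cover of $X$ by $k+1$ sets.

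For each $j$, the composition $U_j\hookrightarrow X\xrightarrow{u}V_j$ is null-homotopic because $V_j$ is contractible. Since $u$ classifies the universal cover, standard covering theory lifts the inclusion $\iota_j:U_j\hookrightarrow X$ to a map $\tilde\iota_j:U_j\to\tilde X$ with $p\circ\tilde\iota_j\simeq\iota_j$, where $p:\tilde X\to X$ is the covering projection. Hence $\iota_j$ will be null-homotopic in $X$ as soon as $\tilde\iota_j$ is null-homotopic in $\tilde X$, which would give $\cat X\le k$.

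To null-homotope $\tilde\iota_j$ I would use obstruction theory: since $\tilde X$ is $(n-k)$-connected, it suffices to arrange that $U_j$ has the homotopy type of a CW complex of dimension at most $n-k$. Refining $u$ by cellular approximation and using that $V_j$ retracts to the $0$-dimensional set $B_j$, a transversality-type computation shows that the relevant preimages in the $n$-dimensional space $X$ have effective dimension at most $n-k$, so that any map from such a low-dimensional complex to $\tilde X$ is null-homotopic by Whitehead's theorem.

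The hard part will be making the effective dimension bound on $U_j$ rigorous in the CW category, where literal transversality is unavailable. This forces either a careful combinatorial refinement of the cover $\{V_j\}$ matched to the cellular structure of $X$, or a replacement of $u$ by a Serre fibration combined with a cell-by-cell obstruction argument identifying each $U_j$, up to homotopy, with the $(n-k)$-dimensional homotopy fibre of the pulled-back fibration.
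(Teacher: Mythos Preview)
The paper does not give its own proof of this statement; it simply quotes it from \cite{Dr3}.  So there is no in-paper argument to compare against, but your sketch has a genuine gap that is not repaired by either of the fixes you propose.

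The issue is not merely technical.  The pulled-back cover $U_j=u^{-1}(V_j)$ need \emph{not} be categorical for $X$.  Take $X=S^1\vee S^2$, so $n=2$, $\pi=\Z$, $B\pi=S^1$, and $\tilde X\simeq\bigvee_{\Z}S^2$ is $1$-connected; here $k=1$ and indeed $\cat u=1$.  With the obvious classifying map $u$ (identity on $S^1$, constant on $S^2$) one of the two sets $U_j$ contains the wedged $S^2$, and the lift $\tilde\iota_j$ sends this $S^2$ isomorphically onto a wedge summand of $\tilde X$, so $\tilde\iota_j$ is \emph{not} null-homotopic.  Your second proposed fix makes this worse, not better: replacing $u$ by a Serre fibration gives $U_j\simeq\tilde X$ for each $j$, and then $\tilde\iota_j:U_j\to\tilde X$ is a homotopy equivalence.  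The phrase ``$(n-k)$-dimensional homotopy fibre'' in your last sentence is the slip: the homotopy fibre $\tilde X$ is $(n-k)$-\emph{connected}, not $(n-k)$-\emph{dimensional}, and that is exactly the wrong direction for the obstruction argument you want.  (A transversality-type choice of $u$ can sometimes be arranged so that the $U_j$ really are low-dimensional---it even works in the toy example above if one lets $u|_{S^2}$ wrap around $S^1$---but turning this into a proof for arbitrary CW complexes is a different and substantially harder argument than what you have written.)

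The argument that actually goes through avoids covers altogether and uses the Ganea--Schwarz criterion of Theorem~\ref{ganea}(2).  The condition $\cat u\le k$ gives a section of $u^{*}\tilde p_k^{B\pi}$ over $X$, and one lifts it to a section of $\tilde p_k^{X}:G_k(X)\to X$.  The comparison map of fibres is $\ast^{k+1}\Omega u:\ast^{k+1}\Omega X\to\ast^{k+1}\pi$; since the homotopy fibre of $\Omega u$ is $\Omega\tilde X$, which is $(n-k-1)$-connected, $\Omega u$ is $(n-k)$-connected, and each further join with $\Omega u$ raises the connectivity by one, so $\ast^{k+1}\Omega u$ is $n$-connected.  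As $\dim X=n$, every obstruction to the lift vanishes, giving $\cat X\le k$.
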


\subsection{Inessential complexes}
One can extend Gromov's theory of inessential manifolds~\cite{Gr} to simplicial complexes and, in particular, to 
pseudo-manifolds. We call an $n$-dimensional complex $X$ {\em inessential} if
a map $u:X\to B\pi$ that classifies the universal covering of $X$ can be deformed to the $(n-1)$-dimensional skeleton.
Otherwise it is called {\em essential}.

\begin{prop}\label{KR}
An $n$-dimensional complex $X$ is inessential if and only if $\cat X\le n-1$.
\end{prop}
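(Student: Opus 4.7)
The plan is to combine the two theorems from the preceding subsection with the trivial monotonicity $\cat(u)\le\cat(X)$ for a map $u:X\to B\pi$.

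First I would rewrite the ``inessential'' condition: by definition, $X$ is inessential precisely when the classifying map $u:X\to B\pi$ of the universal covering can be deformed into $B\pi^{(n-1)}$. By \theoref{deform} applied with $k=n-1$, this is equivalent to the assertion that $\cat(u)\le n-1$. Thus the proposition reduces to the equivalence
\[
\cat(u)\le n-1 \iff \cat X\le n-1.
\]

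For the direction ($\Leftarrow$) I would use the general fact that $\cat(f)\le\cat(Y)$ for any map $f:Y\to Z$: if $Y=U_0\cup\dots\cup U_{n-1}$ with each $U_i$ contractible in $Y$, then $f|_{U_i}$ factors up to homotopy through a point, hence is null-homotopic. Applying this to $f=u$ yields $\cat(u)\le\cat X\le n-1$.

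For the direction ($\Rightarrow$) I would invoke \theoref{space-map} with $k=n-1$. Its hypothesis requires the universal cover $\tilde X$ to be $(n-k)$-connected, i.e.\ $1$-connected, which holds automatically for any universal covering. Hence $\cat(u)\le n-1$ implies $\cat X\le n-1$, completing the equivalence.

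There is essentially no obstacle: the statement is a formal corollary of \theoref{deform} and \theoref{space-map}, the only thing to notice being that the connectivity hypothesis of \theoref{space-map} is automatic in the borderline case $k=n-1$ because universal covers are simply connected. The whole argument fits in a few lines once the two cited theorems are in hand.
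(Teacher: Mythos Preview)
Your proof is correct and follows essentially the same route as the paper: both directions are obtained by combining \theoref{deform} and \theoref{space-map} with the trivial inequality $\cat(u)\le\cat X$, and your explicit check that the $(n-k)$-connectivity hypothesis of \theoref{space-map} is automatic when $k=n-1$ is exactly the point the paper leaves implicit.
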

\begin{proof}
Suppose that $\cat X\le n-1$. Then $\cat(u)\le n-1$ where $u:X\to B\pi$ is a classifying map. By Theorem~\ref{deform}, $X$ is inessential.

If $X$ is inessential, by Theorem~\ref{deform} $\cat(u)\le n-1$. We apply Theorem~\ref{space-map} to $X$ with $k=n-1$ to obtain that
$\cat X\le n-1$.
\end{proof}
REMARK. Proposition~\ref{KR} in the case when $X$ is a closed manifold was proven in~\cite{KR}.

\subsection{Pseudo-manifolds}
We recall that an  $n$-dimensional {\em pseudo-manifold} is a simplicial complex $X$ which is pure, nonbranching and strongly connected.
{\em Pure} means that $X$ is the union of $n$-simplices. {\em Nonbranching} means that there is a subpolyhedron $S\subset X$ of dimension $\le n-2$ such that
$X\setminus S$ is an $n$-manifold. {\em Strongly connected} means that every pair of $n$-simplices $\sigma,\sigma'$ in $X$ can be joined by a chain of simplices
$\sigma_0,\dots,\sigma_m$ with $\sigma_0=\sigma$, $\sigma_m=\sigma'$, and $\dim(\sigma_i\cap\sigma_{i-1})=n-1$ for $i=1,\dots,m$.
Note that every $n$-dimensional pseudo-manifold $X$ admits a CW complex structure with one vertex and one $n$-dimensional cell.

A sheaf $\mathcal O_X$ on an $n$-dimensional pseudo-manifold $X$ generated by the presheaf $U\to H_n(X,X\setminus U)$ is called the {\em orientation
sheaf}. We recall that in case of manifolds the orientation sheaf $\mathcal O_{N}$ on $N$ is defined as the pull-back of 
the canonical $\Z$-bundle $\mathcal O$ on $\R P^{\infty}$ by the map $w_1:N\to\R P^{\infty}$
that represents the first Stiefel-Whitney class. 

A pseudo-manifold $X$ is l{\em ocally orientable} if $\mathcal O_X$ is locally constant with the stalks isomorphic to $\Z$.
For a locally orientable $n$-dimensional pseudo-manifold $X$, $H_n(X;\mathcal O_X)=\Z$, and the $n$-dimensional cell (we may assume that it is unique) defines 
a generator of $\Z$ called the fundamental class $[X]$ of $X$.
\begin{thm}
Let $X$ be a locally orientable $n$-dimensional pseudo-manifold and let $A$ be a $\pi_1(X)$-module. Then the cap product with $[X]$ defines an isomorphism
$$
[X]\cap:H^n(X;A)\to H_0(X;A\otimes\underline{Z})
$$
where $\underline Z$ stands for the $\pi_1(X)$-module $\Z$ defined by the orientation sheaf $\mathcal O_X$.
\end{thm}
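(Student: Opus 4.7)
My plan is to exploit the CW structure on $X$ with one vertex $v$ and one $n$-cell $e_n$ (recorded just before the theorem) and reduce the claim to a direct chain-level calculation on the universal cover $p\colon\wt X\to X$. Since $X$ is locally orientable and $\wt X$ is simply connected, $\wt X$ is an orientable pseudo-manifold, and the orientation twist on $X$ is encoded by the representation $w\colon\pi\to\{\pm1\}$ whose associated $\pi$-module is precisely $\underline{Z}$. Fix lifts $\wt v$ and $\wt e_n$; then $C_n(\wt X)\cong\Z\pi\cdot\wt e_n$ and $C_0(\wt X)\cong\Z\pi\cdot\wt v$ are free of rank one as left $\Z\pi$-modules.

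Next I would compute each side separately. Because $X$ has no cells of dimension $>n$,
$$H^n(X;A)=C^n(X;A)/\delta C^{n-1}(X;A),\qquad C^n(X;A)=\Hom_{\Z\pi}(\Z\pi\cdot\wt e_n,A)\cong A,$$
evaluation at $\wt e_n$ giving the identification. On the other side, since $X$ is connected and $C_0(\wt X)\otimes_{\Z\pi}(A\otimes\underline Z)=A\otimes\underline Z$,
$$H_0(X;A\otimes\underline Z)=(A\otimes\underline Z)_{\pi},$$
the module of coinvariants. The cap product with $[X]$ is defined via a cellular diagonal approximation $\Delta\colon C_*(\wt X)\to C_*(\wt X)\otimes_{\Z}C_*(\wt X)$; in top dimension $\Delta(\wt e_n)$ must contain the summand $\wt v\otimes\wt e_n$ (the only way to split $n=0+n$ involving the unique $0$- and $n$-cells), and the component $\wt e_n\otimes\wt v$ reads off the orientation. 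Tracing this through identifies the chain-level cap product, up to the choice of a generator of $\underline Z$, with the obvious map $A\to A\otimes\underline Z\to(A\otimes\underline Z)_\pi$.

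It then remains to show that this map descends to an isomorphism out of $A/\delta C^{n-1}(X;A)$. Surjectivity is immediate: a representative in $A\otimes\underline Z$ lifts tautologically to a cochain. For injectivity one must show that if $\alpha\in A$ becomes zero in $(A\otimes\underline Z)_\pi$, that is, $\alpha\otimes1=\sum_i(g_i m_i-m_i)$, then $\alpha\in\delta C^{n-1}(X;A)$. The bridge between the two kinds of relations is the pseudo-manifold structure: each $(n-1)$-simplex lies in exactly two $n$-simplices, and strong connectedness plus local orientability let one move between any two $n$-simplices by $(n-1)$-dimensional steps while tracking the orientation sheaf. Together with the fact that $X$ admits a CW structure with one $n$-cell, this is exactly what is needed to convert each coinvariant relation $gm-m$ into the coboundary of an $(n-1)$-cochain dual to a suitable path of $n$-simplices in $\wt X$.

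The main obstacle will be carrying out this last matching cleanly. Concretely, one must show that $\delta C^{n-1}(X;A)\subset A$ coincides, under the identification above, with the $\Z$-submodule of $A\otimes\underline Z$ generated by the elements $gm-m$. I expect to prove this by choosing a simplicial subdivision of $X$ compatible with the CW structure, using the standard fact that in the simplicial cochain complex of an orientable $n$-pseudo-manifold the image of $\delta^{n-1}$ is generated by the sums $\sigma_1^*+\varepsilon\,\sigma_2^*$ attached to each $(n-1)$-face (with the sign $\varepsilon$ dictated by $\mathcal O_X$), then translating these into the claimed coinvariant relations via the $\pi$-action on $C_n(\wt X)$. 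Once this identification is in place the isomorphism is immediate, and naturality in $A$ together with the universal property of $(-)_\pi$ confirms it is implemented by $[X]\cap$.
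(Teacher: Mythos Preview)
Your proposal is correct in outline but takes a genuinely different route from the paper. The paper's proof is a single sentence: it simply observes that in the extreme degrees $(n,0)$ the sheaf-theoretic proof of Poincar\'e duality for locally oriented manifolds (as in Bredon) goes through verbatim for locally orientable pseudo-manifolds, because only the local structure along the top cells is used and the singular locus has codimension $\ge 2$.

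Your approach, by contrast, is a direct cellular computation: identify $H^n(X;A)$ with $A/\delta C^{n-1}$ via the single $n$-cell, identify $H_0(X;A\otimes\underline\Z)$ with the coinvariants $(A\otimes\underline\Z)_\pi$ via the single vertex, and match the two quotients. This is more elementary and self-contained, and it makes the role of the pseudo-manifold axioms explicit. The cost is precisely the ``main obstacle'' you flag. A slightly cleaner way to organize that step: writing $\partial\wt e_n=\sum_j c_j\wt f_j$ with $c_j\in\Z\pi$, the fact that $[X]$ is a cycle in twisted homology says each $c_j$ lies in the twisted augmentation ideal $I_w=\ker(\Z\pi\to\underline\Z)$, which gives $\delta C^{n-1}(X;A)\subset I_w\cdot A$ and hence surjectivity of the cap product. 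For injectivity you need the $c_j$ to generate $I_w$ as a left ideal; this is exactly where strong connectedness and nonbranching enter, since any closed chain of adjacent $n$-simplices in $\wt X$ produces an element $w(g)g-1$, and such chains realize every $g\in\pi$. Your plan to extract this from a simplicial refinement is sound, but you should note that passing between the one-$n$-cell CW structure and the simplicial one is harmless because both compute the same $H^n$; the argument is cleanest if you stay simplicial throughout and only quotient to one top cell at the very end.
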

\begin{proof} We note that in these dimensions the proof of the classical Poincare Duality for locally oriented manifolds (\cite{Bre}) works for pseudo-manifolds as well.
\end{proof}

\begin{prop}\label{pseudo}
Suppose that a map $f:M\to B\pi$ of a closed $n$-dimensional locally orientable pseudo-manifold induces an epimorphism of the fundamental groups. 
Suppose that the orientation sheaf on $M$ is the image under $f^*$ of a sheaf on $B\pi$. Then $f$ can be deformed to the
$(n-1)$-skeleton $B\pi^{(n-1)}$ if and only if $f_*([M])=0$ where $[M]$ is the fundamental class.
\end{prop}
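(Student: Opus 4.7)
The forward direction is straightforward: if $f$ is homotopic to a map $g$ with $g(M)\subset B\pi^{(n-1)}$, then $f_*[M]=g_*[M]$ factors through $H_n(B\pi^{(n-1)};\underline{\Z})=0$, since $\dim B\pi^{(n-1)}=n-1$.

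For the converse, my plan combines obstruction theory with the Poincar\'e duality theorem just proved. Choose a CW structure on the pseudo-manifold $M$ with a single $n$-cell $e^n$, and, after cellular approximation, arrange that $f(M^{(n-1)})\subset B\pi^{(n-1)}$. Since $\dim M=n$, the only obstruction to deforming $f$ further into $B\pi^{(n-1)}$ is the primary one, a class in $H^n(M;\pi_n(B\pi,B\pi^{(n-1)}))$. Using the long exact sequence of the pair, the vanishing of $\pi_k(B\pi)$ for $k\geq 2$, and Hurewicz on the universal cover (exploiting that $E\pi^{(n-1)}$ is $(n-2)$-connected by acyclicity of $E\pi$), I identify $\pi_n(B\pi,B\pi^{(n-1)})$ with $C_n(B\pi)/B_n(B\pi)$ as $\Z[\pi]$-modules, where $B_n=\mathrm{im}(\partial_{n+1})$. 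Under this identification the obstruction cocycle evaluated on $e^n$ becomes the class of the cellular chain $f_\#(e^n)$ modulo $B_n$.

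Now apply the Poincar\'e duality isomorphism $H^n(M;A)\cong H_0(M;A\otimes\underline{\Z})$ with $A=C_n(B\pi)/B_n$. The hypothesis $\underline{\Z}_M=f^*\underline{\Z}_{B\pi}$ ensures compatibility of local coefficients, and surjectivity of $f_*\colon\pi_1(M)\to\pi$ allows me to replace $\pi_1(M)$-coinvariants with $\pi$-coinvariants, so the obstruction class corresponds to $f_\#(e^n)\otimes 1\in (C_n/B_n)\otimes_{\Z[\pi]}\underline{\Z}$. The crucial algebraic input is the right exact sequence
\begin{equation*}
B_n\otimes_{\Z[\pi]}\underline{\Z}\to C_n\otimes_{\Z[\pi]}\underline{\Z}\to (C_n/B_n)\otimes_{\Z[\pi]}\underline{\Z}\to 0,
\end{equation*}
which translates the vanishing of $f_\#(e^n)\otimes 1$ in the rightmost group into the condition that $f_\#(e^n)\otimes 1$ lies in the image of $\bar\partial_{n+1}$, i.e.\ is a boundary in the complex $C_*(B\pi)\otimes_{\Z[\pi]}\underline{\Z}$. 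Since $f_\#(e^n)\otimes 1$ is automatically a cycle and represents $f_*[M]\in H_n(B\pi;\underline{\Z})$, this boundary condition is exactly $f_*[M]=0$, completing the argument.

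The main obstacle I anticipate is the identification of $\pi_n(B\pi,B\pi^{(n-1)})$ with $C_n/B_n$ as a $\Z[\pi]$-module, together with the concrete description of the obstruction cocycle in these terms. This requires careful handling of local coefficients on the universal cover, with extra attention in the borderline case $n=2$ (directly relevant to the surface application), where $\pi_2$ of the pair and its $\pi_1$-action enter in a less transparent way than the easier case $n\geq 3$.
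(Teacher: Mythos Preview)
Your argument is correct (at least for $n\geq 3$, which is all the paper needs), but it takes a noticeably more hands-on route than the paper's proof. The paper never identifies the coefficient module $\pi_n(B\pi,B\pi^{(n-1)})$ explicitly. Instead it observes that the compression obstruction $o_f$ is natural: $o_f=f^*(o')$, where $o'\in H^n(B\pi;\mathcal F)$ is the universal obstruction to compressing the identity of $B\pi$ into $B\pi^{(n-1)}$. Then the cap-product formula
\[
f_*\bigl([M]\cap f^*(o')\bigr)=f_*[M]\cap o'=0
\]
and the fact that $f_*$ is an isomorphism on $H_0$ with any local coefficients (this is where the $\pi_1$-epimorphism hypothesis enters) give $[M]\cap f^*(o')=0$; Poincar\'e duality in top degree then yields $o_f=f^*(o')=0$. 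This is a two-line argument once the ingredients are in place.

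Your approach trades that naturality step for an explicit computation: identify $\pi_n(B\pi,B\pi^{(n-1)})\cong C_n/B_n$, recognize the obstruction cocycle as $f_\#(e^n)\bmod B_n$, and then unwind Poincar\'e duality and coinvariants to match it with $f_*[M]$. This works and has the virtue of being self-contained, but it costs you the module identification and the cocycle description, and it creates the $n=2$ worry you flag. The paper's naturality argument sidesteps that worry entirely, and in any case the application in the paper is to the $4$-dimensional pseudo-manifold $(N\times N)/\Delta N$, so the $n=2$ boundary case is not actually relevant here.
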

\begin{proof}
The 'only if' direction follows from the dimensional reason and the fact that $f_*$ does not change under a homotopy. 

Let $f_*([M])=0$. We show that the primary obstruction $o_f$ for deformation of $f$ to the $(n-1)$-skeleton is trivial. Since $o_f$
is the image of the primary obstruction $o'$ to deformation of $B\pi$ to $B\pi^{(n-1)}$, it suffices to prove the equality
$f^*(o')=0$. Note that 
$$
f_*([M]\cap f^*(o'))=f_*([M])\cap o'=0.
$$
Since $f$ induces an epimorphism of the fundamental groups, it induces isomorphism of 0-dimensional homology groups with any local coefficients.
Hence, $[M]\cap f^*(o')=0$. Since in dimension 0 the Poincare Duality holds  for locally orientable pseudo-manifolds, we obtain that $f^*(o')=0$.
\end{proof}

\subsection{Homology of projective space}
We denote by $\mathcal O$ the canonical local coefficient system on  the projective space $\R P^{\infty}$ with the fiber $\Z$.

\begin{prop}\label{even-odd}
$$H_i(\R P^{\infty};\mathcal O)=\begin{cases}\Z_2, &\text{if}\  i\ \text{is even}\\ 
                                                                      0 &  \text{if}\  i\ \text{is odd} .\end{cases}$$
\end{prop}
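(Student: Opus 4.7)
The plan is to compute $H_*(\R P^{\infty};\mathcal O)$ directly from the cellular chain complex of the universal cover $S^\infty \to \R P^\infty$, viewing it as a free resolution of $\Z$ over $\Z[\Z/2]$ and tensoring with $\mathcal O$.

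First, I would set up the standard CW structure on $S^\infty$ with two cells in each dimension interchanged by the antipodal action $t$. As is well known, the cellular chain complex $C_*(S^\infty)$ is the standard periodic free resolution
\[
\cdots \xrightarrow{1+t} \Z[\Z/2] \xrightarrow{1-t} \Z[\Z/2] \xrightarrow{1+t} \Z[\Z/2] \xrightarrow{1-t} \Z[\Z/2] \to 0,
\]
so that $\partial_n$ is multiplication by $1+(-1)^n t$ (which I would justify quickly by checking that $\partial_1$ must have image $\ker(\varepsilon)=(1-t)\Z[\Z/2]$ and then alternating).

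Next, since $\R P^\infty = K(\Z/2,1)$, the chain complex computing $H_*(\R P^\infty;\mathcal O)$ is $\mathcal O \otimes_{\Z[\Z/2]} C_*(S^\infty)$. The generator $t$ acts on the stalk $\mathcal O \cong \Z$ by $-1$, so under the identification $\mathcal O \otimes_{\Z[\Z/2]} \Z[\Z/2] \cong \Z$ the differential $\partial_n$ becomes multiplication by $1-(-1)^n$. Thus the chain complex reduces to
\[
\cdots \xrightarrow{2} \Z \xrightarrow{0} \Z \xrightarrow{2} \Z \xrightarrow{0} \Z,
\]
with $\partial_n = 2$ for $n$ odd and $\partial_n = 0$ for $n$ even positive.

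Reading off the homology is then immediate: in even degrees $2i$ the kernel is $\Z$ and the image of $\partial_{2i+1}$ is $2\Z$, giving $\Z/2$; in odd degrees $2i+1$ the kernel is $0$, giving $0$. This yields exactly the stated formula. I do not anticipate a serious obstacle — the only thing to be careful about is the sign convention on the boundary and the fact that $\mathcal O$ is used as a right $\Z[\Z/2]$-module via $x\cdot t = -x$, so that $\partial_n \otimes 1$ becomes multiplication by $1-(-1)^n$ rather than $1+(-1)^n$.
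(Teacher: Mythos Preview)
Your proof is correct and is essentially the same as the paper's: both identify $H_*(\R P^\infty;\mathcal O)$ with the group homology $H_*(\Z_2;\underline\Z)$ computed via the standard periodic free resolution, obtaining the chain complex $\cdots\xrightarrow{2}\Z\xrightarrow{0}\Z\xrightarrow{2}\Z\xrightarrow{0}\Z\xrightarrow{2}\Z$. The only difference is presentational---you phrase it in terms of the cellular chains of $S^\infty$ while the paper cites the group-homology complex directly---but the underlying computation is identical.
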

\begin{proof} Let $\underline Z$ denote a $\Z_2$-module $\Z$ with the flip over 0 action.
We note that $H_i(\R P^{\infty};\mathcal O)=H_i(\Z_2,\underline Z)$. If $\Z_2=\{1,t\}$, then the homology groups $H_*(\Z_2,\underline Z)$
are the homology of the chain complex (\cite{Bro})
$$
\begin{CD}
\cdots @>1-t>> \underline Z @>1+t>>\underline Z @>1-t>>\underline Z @>1+t>>\underline Z @>1-t>>\underline Z
\end{CD}
$$
which is the complex
$$
\begin{CD}
\cdots @>2>>\Z @>0>>\Z @>2>>\Z @>0>>\Z @>2>>\Z.
\end{CD}
$$
\end{proof}
\subsection{Schwarz genus}
We recall that the {\em Schwarz genus} $g(f)$ of a fibration $f:E\to B$ is the minimal number $k$ such that $B$ can be covered by $k$ open sets
on which $f$ admits a section~\cite{Sch}. Then $\cat X+1=g(c:\ast\to X)$ and $TC(X)+1=g(\Delta:X\to X\times X)$ where the constant map $c$
and the diagonal map $\Delta$ are assumed to be represented by fibrations. Schwarz connected the genus $g(f)$ with the existence of a section of a special
fibration constructed out of $f$ by means of an operation that generalizes the Whitney sum. 

Here is the construction:
Given two maps~$f_1:X_1\to Y$ and~$f_2:X_2\to Y$, we define the {\em fiberwise join} of spaces $X_1$ and $X_2$ as
\[
X_1\ast_YX_2=\{tx_1+(1-t)x_2\in X_1\ast X_2\mid f_1(x_1)=f_2(x_2)\}
\]
and define the {\em fiberwise join} of~$f_1$, $f_2$
as the map
\[
f_1{\ast_Y}f_2:X_1\ast_YX_2\to Y,\quad \text{with}\ \ (f_1{\ast_Y}f_2)(tx_1+(1-t)x_2)=f_1(x_1)=f_2(x_2).
\]
The iterated $n$ times fiberwise join product of a map $f:E\to B$ is denoted as $\ast^n_B:\ast^n_BE\to B$.
\begin{thm}[\cite{Sch}, Theorem 3]\label{Schwarz}
For a fibration $f:E\to B$ the Schwarz genus $g(f)\le n$ if and only if $\ast_b^nf:\ast_B^n\to E$ admits a section.
\end{thm}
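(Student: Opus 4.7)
The plan is the classical Schwarz correspondence between local sections of $f$ and global sections of its iterated fiberwise join. I would work with the explicit description of the fiber of $\ast_B^n f$ over $b\in B$ as the space of formal convex combinations $\sum_{i=1}^n t_i e_i$ with $e_i\in f^{-1}(b)$, $t_i\ge 0$, $\sum t_i=1$, subject to the identifications $(\ldots,(0,e_i),\ldots)\sim(\ldots,(0,e_i'),\ldots)$. Both directions are built around this description, with a partition of unity on one side and the barycentric coordinates of the join on the other.

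For the direction $g(f)\le n\Rightarrow$ section exists, I would start from an open cover $U_1,\dots,U_n$ of $B$ with continuous local sections $\sigma_i\colon U_i\to E$ of $f$, choose a partition of unity $\{\lambda_i\}_{i=1}^n$ subordinate to $\{U_i\}$ (using paracompactness of $B$, standard in Schwarz's setting), and set
$$
s(b)=\sum_{i=1}^n \lambda_i(b)\,\sigma_i(b)\in\ast_B^n E.
$$
Since $\lambda_i$ is supported in $U_i$, the term $\lambda_i(b)\sigma_i(b)$ is well defined via the join identifications even at points where $\sigma_i$ is undefined, and $s$ manifestly satisfies $(\ast_B^n f)\circ s=\id_B$. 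Conversely, given a section $s\colon B\to\ast_B^n E$, I would write $s(b)=\sum_{i=1}^n t_i(b)\,e_i(b)$ with continuous barycentric coordinates $t_i\colon B\to[0,1]$ and set $U_i=\{b\in B\mid t_i(b)>0\}$; these $n$ open sets cover $B$ because $\sum t_i\equiv 1$, and the assignments $b\mapsto e_i(b)$ furnish continuous local sections of $f$ over the corresponding $U_i$, giving $g(f)\le n$.

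The main technical point I expect is bookkeeping around the join topology rather than any deeper idea: one must verify that $e_i\colon U_i\to E$ is genuinely continuous given only that $s$ is continuous and $t_i>0$ on $U_i$, and conversely that the global formula for $s$ remains continuous across the locus where some $\lambda_i$ vanishes. Both points are handled by unpacking the quotient topology on the iterated fiberwise join in which the $i$-th component becomes invisible as soon as $t_i=0$.
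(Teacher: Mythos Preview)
The paper does not supply its own proof of this theorem: it is stated as a citation of \cite{Sch}, Theorem~3, and used as a black box. Your proposal is the classical Schwarz argument and is correct; it is essentially the proof found in the cited reference, so there is nothing to compare.
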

Alos Schwarz proved the following~\cite{Sch}:
\begin{thm}\label{Schwarz2}
Let $\beta$ be the primary obstruction to a section of a fibration $f:E\to B$.
Then $\beta^n$ is the primary obstruction to a section of $\ast^n_bf$.
\end{thm}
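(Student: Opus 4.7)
The plan is to reduce the statement to the two-factor case by induction on $n$ and then prove that for two fibrations $f_1 : E_1 \to B$ and $f_2 : E_2 \to B$ with primary obstructions $\beta_1, \beta_2$, the primary obstruction to a section of $f_1 \ast_B f_2$ is the cup product $\beta_1 \smile \beta_2$. Applying this iteratively, together with associativity of both the fiberwise join and the cup product, yields $\beta^n$ as the primary obstruction to $\ast^n_B f$.

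First I would identify the fiber of $f_1 \ast_B f_2$ with the topological join $F_1 \ast F_2$ of the fibers of $f_1$ and $f_2$, and recall the standard homotopy equivalence $F_1 \ast F_2 \simeq \Sigma(F_1 \wedge F_2)$. If $F_i$ is $(k_i-1)$-connected with $\pi_{k_i}(F_i) = A_i$, then a Hurewicz/K\"unneth computation shows $F_1 \ast F_2$ is $(k_1+k_2)$-connected with $\pi_{k_1+k_2+1}(F_1 \ast F_2) \cong A_1 \otimes A_2$. Hence the primary obstruction to a section of $f_1 \ast_B f_2$ lives in $H^{k_1+k_2+2}(B; A_1 \otimes A_2)$, the same group where $\beta_1 \smile \beta_2$ naturally resides.

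Next I would set up partial sections cell by cell. Over the $k_i$-skeleton $B^{(k_i)}$ a section $s_i$ of $f_i$ exists; over a $(k_i+1)$-cell $e$ it cannot generally be extended, and the obstruction cocycle sends $e$ to the element of $A_i = \pi_{k_i}(F_i)$ represented by the map $\partial e \to F_i$ given by $s_i$, producing a representative of $\beta_i$. For the join, over $B^{(k_1+k_2+1)}$ one can combine $s_1$ and $s_2$ at parameter $t = \tfrac{1}{2}$ to obtain a section $s$ of $f_1 \ast_B f_2$. Over a $(k_1+k_2+2)$-cell one now computes the attaching map on $\partial e \cong S^{k_1+k_2+1}$ with values in $F_1 \ast F_2 \simeq \Sigma(F_1 \wedge F_2)$; a direct cocycle-level comparison identifies this class with the product of the two individual obstruction cocycles under the cellular cup product, i.e., the image of $\beta_1 \otimes \beta_2$ under the external-to-internal product map combined with the diagonal on cochains of $B$.

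The main obstacle is this final cocycle-level identification: one has to show that the join-interpolation $t\mapsto t s_1 + (1-t) s_2$ faithfully realizes the cup product at the level of the primary obstruction cocycles, which requires matching the cell decomposition of $B \times I$ underlying the join with the standard Alexander--Whitney formula for the cup product on cellular cochains of $B$, and then invoking the isomorphism $\pi_{k_1+k_2+1}(\Sigma(F_1 \wedge F_2)) \cong A_1 \otimes A_2$ in the relevant connectivity range. Once this two-factor product formula is in place, induction on $n$ combined with the associativity $E \ast_B E \ast_B E = (E \ast_B E) \ast_B E$ and the associativity and graded-commutativity of the cup product delivers $\beta^n$ as the primary obstruction to a section of $\ast^n_B f$.
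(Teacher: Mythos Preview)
The paper does not supply a proof of this statement; it is quoted directly from Schwarz~\cite{Sch} without argument. Your sketch is essentially the classical proof found there: induct on the number of factors, identify the fiber of the two-fold fiberwise join with $F_1\ast F_2\simeq\Sigma(F_1\wedge F_2)$, read off its connectivity and lowest homotopy group as $A_1\otimes A_2$ via Hurewicz and K\"unneth, and then match the primary obstruction cocycle with the Alexander--Whitney cup product of the individual obstruction cocycles.

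One point worth tightening: the sentence ``over $B^{(k_1+k_2+1)}$ one can combine $s_1$ and $s_2$ at parameter $t=\tfrac12$'' is not literally correct, since $s_i$ is only defined on $B^{(k_i)}$, not on the higher skeleton. What guarantees a section over $B^{(k_1+k_2+1)}$ is simply the $(k_1+k_2)$-connectedness of the fiber $F_1\ast F_2$; no explicit midpoint formula is needed for existence. The genuine content, as you yourself flag in the ``main obstacle'' paragraph, is the cocycle-level identification on $(k_1+k_2+2)$-cells, and there one does build a specific partial section (using $s_1$ and $s_2$ on complementary subcomplexes and interpolating along the join coordinate) in order to compute. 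Modulo this wording issue your plan is sound and coincides with Schwarz's original argument.
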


Let~$p^X:PX\to X\times X$ be the end points map: $p^X(\phi)=(\phi(0),\phi(1))\in X\times X$.  Here
$PX$ is the space of all paths $\phi:[0,1]\to X$ in~$X$.  Clearly, $p^X$ is a Serre path fibration
that represents the diagonal map $\Delta:X\to X\times X$. Let $P_0X\subset PX$ be the space of paths
issued from a base point $x_0$ and let $\tilde p^X:P_0X\to X$ is define as $\tilde p^X(\phi)=\phi(1)$.
Then $\tilde p^X$ is a fibration representative for the map $x_0\to X$. 

For $n>0$ we denote by $p_n^X=\ast^{n+1}_{X\times X}p^X$ and $\Delta_n(X)=\ast^{n+1}_{X\times X}PX$.
Thus, 
elements of $\Delta_n(X)$ can be viewed as formal linear combinations
$\sum_{i=0}^nt_i\phi_i$ where $\phi_i:[0,1]\to X$ with $\phi_1(0)=\cdots=\phi_n(0)$,
$\phi_1(1)=\cdots=\phi_n(1)$, $t_i\ge 0$, and $\sum t_i=1$.

Similarly we use notations  $\tilde p^{X}_n=\ast^{n+1}_X\tilde p^X$ and $G_n(X)=\ast^{n+1}_{X}P_0X$.
The fibration  $\tilde p^{X}_n$ is called the $n$-th Ganea fibration.
Theorem~\ref{Schwarz} stated for these fibrations
produces the following
\begin{thm}\label{ganea}
For a~CW-space~$X$,~

(1) $TC(X)\le n$ if and only if there exists a
section of~$p_n^X:\Delta_n(X)\to X\times X$;

(2) $\cat X\le n$ if and only if there exists a
section of~$\tilde p_n^X:G_n(X)\to X$.
\end{thm}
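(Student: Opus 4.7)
The plan is to derive Theorem~\ref{ganea} as a direct specialization of Schwarz's criterion (Theorem~\ref{Schwarz}) to two specific fibrations that realize the diagonal and the constant map, respectively, then carefully track the shift between Schwarz genus (which counts open sets) and $TC$, $\cat$ (which count sets minus one).

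First I would recall from the Schwarz genus subsection the two identities already recorded: $TC(X)+1=g(\Delta: X\to X\times X)$ and $\cat X + 1 = g(c: \ast\to X)$, where both maps are to be replaced by fibrations. The next step is to verify that $p^X:PX\to X\times X$, given by $\phi\mapsto(\phi(0),\phi(1))$, is indeed a Serre fibration representative of $\Delta$; this is standard, since $X$ embeds in $PX$ as the constant paths, this inclusion is a homotopy equivalence, and the composition with $p^X$ is $\Delta$. Analogously, $\tilde p^X:P_0X\to X$, with $\phi\mapsto\phi(1)$, is a fibration whose total space $P_0X$ is contractible and whose projection replaces $c:\ast\to X$ up to homotopy.

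With these identifications in hand, part (1) follows by applying Theorem~\ref{Schwarz} to $f=p^X$ with $B=X\times X$: one gets $g(p^X)\le n+1$ if and only if the $(n+1)$-fold fiberwise join $\ast^{n+1}_{X\times X}p^X = p_n^X$ admits a section. Combining with $TC(X)+1=g(p^X)$ gives $TC(X)\le n$ iff $p_n^X:\Delta_n(X)\to X\times X$ has a section. Part (2) is entirely parallel: apply Theorem~\ref{Schwarz} to $f=\tilde p^X$ with $B=X$, note $\ast^{n+1}_X\tilde p^X=\tilde p_n^X$, and translate via $\cat X+1=g(\tilde p^X)$.

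The argument is essentially bookkeeping, so there is no genuine obstacle; the only point that needs a small remark is the index shift (the $n+1$ vs.\ $n$ issue), arising because our reduced convention declares $TC$ and $\cat$ to be one less than the number of open sets in a covering, whereas $g$ counts those open sets directly. Once this is made explicit, both equivalences fall out immediately from Theorem~\ref{Schwarz}.
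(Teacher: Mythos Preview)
Your proposal is correct and follows exactly the paper's approach: the paper simply states that Theorem~\ref{ganea} is ``Theorem~\ref{Schwarz} stated for these fibrations,'' and your argument spells out precisely that specialization together with the index shift from the reduced conventions for $TC$ and $\cat$. Your write-up is in fact more detailed than what the paper provides.
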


We note that the fiber of both fibrations $p^X$ and $\tilde p^X$ is the loop space $\Omega X$.
The fiber $F_n=(p^X_n)^{-1}(x_0)$ of the fibration $p^X_n$ is the join product $\Omega X\ast\dots\ast\Omega X$ of $n+1$ copies of the loop space
$\Omega X$ on $X$. So, $F_n$ is $(n-1)$-connected.

A continuous map $f:X\to Y$ for any $n$ defines the commutative diagram
$$
\begin{CD}
\Delta_n(X) @>>>\Delta_n(Y)\\
@V p_n^XVV @Vp_n^YVV\\
X\times X @>f\times f>> Y\times Y.\\
\end{CD}
$$
\begin{cor}\label{lift}
If $TC(X)\le n$, then for any $f:X\to Y$ the map $f\times f$ admits a lift with respect to $p_n^Y$.
\end{cor}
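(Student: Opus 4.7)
The plan is essentially to chase the commutative diagram displayed just before the corollary, using the equivalence between $TC(X)\le n$ and existence of a section of $p^X_n$.

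First I would invoke \theoref{ganea}(1): since $TC(X)\le n$, there exists a section $s:X\times X\to\Delta_n(X)$ of the fibration $p^X_n:\Delta_n(X)\to X\times X$. This is the only real input from hypothesis.

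Second, I would use the naturality statement expressed by the displayed square immediately preceding the corollary. Concretely, the map $f:X\to Y$ induces $\wt f:PX\to PY$ by post-composition $\phi\mapsto f\circ\phi$, and this covers $f\times f$ on endpoints. Iterating the fiberwise join construction, we obtain an induced map $\Delta_n(f):\Delta_n(X)\to\Delta_n(Y)$ fitting into
$$
\begin{CD}
\Delta_n(X) @>\Delta_n(f)>>\Delta_n(Y)\\
@V p_n^X VV @Vp_n^YVV\\
X\times X @>f\times f>> Y\times Y.
\end{CD}
$$

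Finally, the desired lift is simply the composite $L:=\Delta_n(f)\circ s:X\times X\to\Delta_n(Y)$, for which
$$
p_n^Y\circ L=p_n^Y\circ\Delta_n(f)\circ s=(f\times f)\circ p_n^X\circ s=f\times f.
$$
There is no real obstacle here; the statement is an immediate formal consequence of the characterization of $TC$ via sections of the fiberwise join (\theoref{ganea}) together with the functoriality of the fiberwise join construction $\Delta_n(-)$, both of which are already in hand.
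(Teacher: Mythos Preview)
Your proof is correct and is exactly the argument the paper has in mind: the corollary is stated immediately after the commutative square and \theoref{ganea}, and the intended proof is precisely to compose the section guaranteed by \theoref{ganea}(1) with the induced map $\Delta_n(X)\to\Delta_n(Y)$ from that square.
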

\subsection{Berstein-Schwarz cohomology class}
The Berstein-Schwarz class of a discrete group $\pi$ is the first obstruction $\beta_{\pi}$ to a lift of $B\pi=K(\pi,1)$ to the universal covering $E\pi$. 
Note that $\beta_{\pi}\in H^1(\pi,I(\pi))$ where $I(\pi)$ is the augmentation ideal of the group ring $\Z\pi$~\cite{Be},\cite{Sch}.

\begin{thm}[Universality~\cite{DR},\cite{Sch}]\label{universal}
For any cohomology class $\alpha\in H^k(\pi,L)$ there is a homomorphism of $\pi$-modules $I(\pi)^k\to L$ such that the induced homomorphism for cohomology takes $(\beta_{\pi})^k\in H^k(\pi;I(\pi)^k)$ to $\alpha$  where $I(\pi)^k=I(\pi)\otimes\dots\otimes I(\pi)$ and $(\beta_{\pi})^k=\beta_{\pi}\smile\dots\smile\beta_{\pi}$.
\end{thm}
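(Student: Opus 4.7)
My plan is to prove the universality of $\beta_\pi^k$ by interpreting it as a Yoneda product and constructing an explicit free resolution of $\Z$ of length $k$ whose top syzygy is $I(\pi)^{\otimes k}$.

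The starting point is the short exact sequence
$$0 \to I(\pi) \to \Z\pi \to \Z \to 0 \qquad (\ast)$$
of $\pi$-modules, which represents $\beta_\pi$ in $\text{Ext}^1_{\Z\pi}(\Z, I(\pi)) = H^1(\pi, I(\pi))$. I would tensor $(\ast)$ over $\Z$ (with the diagonal $\pi$-action) repeatedly with copies of $I(\pi)$; since every term of $(\ast)$ is $\Z$-free, each tensor product preserves exactness, giving
$$0 \to I(\pi)^{\otimes (j+1)} \to \Z\pi \otimes I(\pi)^{\otimes j} \to I(\pi)^{\otimes j} \to 0$$
for each $j \geq 0$. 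Splicing these $k$ sequences together yields a length-$k$ exact sequence
$$0 \to I(\pi)^{\otimes k} \to \Z\pi \otimes I(\pi)^{\otimes (k-1)} \to \cdots \to \Z\pi \otimes I(\pi) \to \Z\pi \to \Z \to 0. \qquad (\ast\ast)$$
I would then observe that each middle term $\Z\pi \otimes I(\pi)^{\otimes j}$ with diagonal action is $\Z\pi$-free (via the standard isomorphism $g \otimes v \mapsto g \otimes g^{-1}v$, which converts the diagonal action to action only on the left factor), so $(\ast\ast)$ extends to a free resolution of $\Z$ with $k$-th syzygy equal to $I(\pi)^{\otimes k}$.

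Next I would identify the extension class of $(\ast\ast)$ in $\text{Ext}^k_{\Z\pi}(\Z, I(\pi)^{\otimes k}) = H^k(\pi, I(\pi)^{\otimes k})$ with $\beta_\pi^k$. This is the compatibility between cup product in group cohomology and the Yoneda product on Ext: the Yoneda splice of $k$ copies of $(\ast)$ corresponds to the $k$-fold cup product of $\beta_\pi$ with itself.

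Finally, I would invoke the general principle for Ext via resolutions: if $0 \to K \to P_{k-1} \to \cdots \to P_0 \to \Z \to 0$ is exact with each $P_i$ projective, then for any $\pi$-module $L$ the natural map
$$\Hom_{\Z\pi}(K, L) \longrightarrow \text{Ext}^k_{\Z\pi}(\Z, L), \qquad f \longmapsto f \cdot [\text{extension}]$$
is surjective (this is immediate from the dimension-shifting formula $\text{Ext}^k(\Z, L) \cong \Hom(K, L)/\text{im}\,\Hom(P_{k-1}, L)$). Applied to $(\ast\ast)$ with $K = I(\pi)^{\otimes k}$, this says every $\alpha \in H^k(\pi, L)$ is of the form $f_*(\beta_\pi^k)$ for some $\pi$-homomorphism $f: I(\pi)^{\otimes k} \to L$, which is exactly the claim.

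The only non-routine step is the identification of the Yoneda class of $(\ast\ast)$ with $\beta_\pi^k$; this is classical but would require unwinding the definition of cup product in terms of diagonal approximations, and is the main point where I would need to cite a standard reference (e.g.~Brown's book) rather than redo the calculation.
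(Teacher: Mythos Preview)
Your argument is correct and is essentially the standard proof (going back to Schwarz and made explicit in~\cite{DR}). Note, however, that the paper does not actually prove this theorem: it is stated with references~\cite{DR},\cite{Sch} and used as a black box, so there is no ``paper's own proof'' to compare against.

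A couple of minor remarks on your write-up. First, the identification of the Yoneda class of the spliced extension $(\ast\ast)$ with the cup product $\beta_\pi^k$ is, as you say, the only point requiring care; the relevant fact is that the cup product in $H^*(\pi;-)$ agrees with the Yoneda composition on $\mathrm{Ext}^*_{\Z\pi}(\Z,-)$, for which Brown~\cite{Bro}, Chapter~V, is the right reference. Second, your freeness claim for $\Z\pi\otimes I(\pi)^{\otimes j}$ is correct but uses that $I(\pi)^{\otimes j}$ is $\Z$-free (so that after untwisting the diagonal action you get $\Z\pi\otimes_{\Z}(\text{free abelian group})$, which is $\Z\pi$-free); you implicitly invoke this when you say the short exact sequences stay exact after tensoring, so it is already available. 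With those two points granted, the dimension-shifting step at the end is exactly right: the surjection $\Hom_{\Z\pi}(I(\pi)^{\otimes k},L)\twoheadrightarrow \mathrm{Ext}^k_{\Z\pi}(\Z,L)$ sends $f$ to the pushforward $f_*$ of the class of $(\ast\ast)$, i.e.\ to $f_*(\beta_\pi^k)$.
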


\section{Computation of the LS-category of the cofiber}

For $X=\R P^n$, $n>1$, the equality $TC(X)=\cat(C_{\Delta X})$ was established in \cite{GV2}.
Together with the computation $TC(\R P^2)=3$ from~\cite{FTY} it gives the following
\begin{thm}\label{FTY}
$$\cat((\R P^2\times\R P^2)/\Delta\R P^2)=3.$$
\end{thm}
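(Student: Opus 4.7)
The statement is essentially an immediate corollary of two results that have just been cited, so my proof plan is to simply assemble them.

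First, I would invoke the result of Gonz\'alez–Vandembroucq \cite{GV2}, which establishes that for $X = \R P^n$ with $n > 1$, the equality $TC(X) = \cat(C_{\Delta X})$ holds. Specializing to $n = 2$, this gives $\cat(C_{\Delta \R P^2}) = TC(\R P^2)$. Second, I would invoke Farber–Tabachnikov–Yuzvinsky \cite{FTY}, which computes $TC(\R P^2) = 3$. Combining these two equalities yields
\[
\cat\bigl((\R P^2 \times \R P^2)/\Delta \R P^2\bigr) = \cat(C_{\Delta \R P^2}) = TC(\R P^2) = 3,
\]
which is exactly the assertion, once one unpacks the notation $C_{\Delta X} = (X \times X)/\Delta X$ from the introduction.

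Since both ingredients are black-box citations, there is no genuine obstacle; the entire content of the proof is the bookkeeping of matching notation between the two cited papers. If one wanted to be slightly more self-contained, the single sentence of proof would just cite \cite{GV2} and \cite{FTY} in sequence, which is presumably exactly what the author writes.
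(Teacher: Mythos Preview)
Your proposal is correct and matches the paper's approach exactly: the paper states the theorem as an immediate consequence of the equality $TC(\R P^n)=\cat(C_{\Delta\R P^n})$ from \cite{GV2} together with the computation $TC(\R P^2)=3$ from \cite{FTY}, with no further argument. One minor correction: the authors of \cite{GV2} are Garc\'{\i}a Calcines and Vandembroucq, not Gonz\'alez and Vandembroucq.
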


\subsection{Free abelian topological groups} Let $\bA(N)$ denote the free abelian topological group generated by $N$ (see~\cite{M},\cite{G} or \cite{Dr1}). Let $j:N\to \bA(N)$ be the natural inclusion.
By the Dold-Thom theorem~\cite{DT} (see also~\cite{Dr1}), $\pi_i(\bA(N))=H_i(N)$ and $j_*:\pi_i(N)\to \pi_i(\bA(N))$ is the Hurewicz homomorphism.
Therefore, $\bA(\R P^2)$ is homotopy equivalent to $\R P^{\infty}$. Moreover, for a non-orientable surface $N$ of genus $g$ the space $\bA(N)$
is homotopy equivalent to $\R P^{\infty}\times T^{g-1}$ where $T^m=S^1\times\cdots\times S^1$ denotes the $m$-dimensional torus. 

Let $\tilde{\mathcal O}$ be the twisted coefficient system on $\bA(N)$ that comes from the canonical 
system $\mathcal O$ on $\R P^{\infty}$ as the pull-back under the projection $\R P^{\infty}\times T^{g-1}\to\R P^{\infty}$.

\begin{prop}\label{tor}
For any non-orientable surface $N$ 
$$H_2(\bA(N);\tilde{\mathcal O})=\oplus\Z_2.$$
\end{prop}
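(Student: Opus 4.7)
The plan is to reduce the computation to the cohomology of $\mathbb{R}P^{\infty}$ already calculated in Proposition~\ref{even-odd}, using the product structure from the Dold--Thom identification $\bA(N)\simeq \R P^{\infty}\times T^{g-1}$ and the fact that $\tilde{\mathcal O}$ is pulled back from the first factor only.

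First I would note that the local system $\tilde{\mathcal O}$ on $\R P^{\infty}\times T^{g-1}$ can be written as an external tensor product $\mathcal O\boxtimes \underline{\Z}$, where $\underline{\Z}$ is the trivial local system on $T^{g-1}$. Since the twisting lives entirely on the $\R P^{\infty}$ factor and the coefficients on $T^{g-1}$ are untwisted, the K\"unneth theorem applies in the usual form: the chain complex computing $H_*(\R P^{\infty}\times T^{g-1};\tilde{\mathcal O})$ is naturally isomorphic to $C_*(\R P^{\infty};\mathcal O)\otimes C_*(T^{g-1};\Z)$ (the cellular chains of the torus with twisted coefficients lifted from the base are just the ordinary cellular chains, tensored over $\Z$ with the fiber). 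Hence for each $n$ there is a short exact K\"unneth sequence
\begin{equation*}
0\to\bigoplus_{i+j=n}H_i(\R P^{\infty};\mathcal O)\otimes H_j(T^{g-1};\Z)\to H_n(\bA(N);\tilde{\mathcal O})\to\bigoplus_{i+j=n-1}\mathrm{Tor}(H_i(\R P^{\infty};\mathcal O),H_j(T^{g-1};\Z))\to 0.
\end{equation*}

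Next I would plug in the values. By Proposition~\ref{even-odd}, $H_0(\R P^{\infty};\mathcal O)=\Z_2$, $H_1(\R P^{\infty};\mathcal O)=0$, $H_2(\R P^{\infty};\mathcal O)=\Z_2$. The homology of the torus $T^{g-1}$ is free abelian of rank $\binom{g-1}{j}$ in degree $j$. Feeding these into the degree $n=2$ sequence, the $\mathrm{Tor}$ summands all vanish (one factor is torsion and the other is free), so
\begin{equation*}
H_2(\bA(N);\tilde{\mathcal O})\;\cong\;\bigl(\Z_2\otimes\Z^{\binom{g-1}{2}}\bigr)\oplus\bigl(0\otimes\Z^{g-1}\bigr)\oplus\bigl(\Z_2\otimes\Z\bigr)\;\cong\;\Z_2^{\binom{g-1}{2}+1},
\end{equation*}
which is a (nontrivial) direct sum of copies of $\Z_2$, as claimed.

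The only real subtlety is justifying the K\"unneth splitting with twisted coefficients on one factor; I expect this to be the main obstacle, but it is standard since the universal cover of $\R P^{\infty}\times T^{g-1}$ is the product of the universal cover of $\R P^{\infty}$ with $\R^{g-1}$, so the equivariant chain complex factors as a tensor product of $\Z[\Z_2]$-chains on $S^{\infty}$ with the (free) cellular chains of $T^{g-1}$, and the coefficient system $\underline{\Z}$ used in tensoring commutes with the second factor. Everything else is direct substitution.
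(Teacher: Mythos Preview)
Your proof is correct and follows essentially the same route as the paper's: both apply the K\"unneth formula to the product $\R P^{\infty}\times T^{g-1}$ with the twist living on the first factor, invoke Proposition~\ref{even-odd} for the $\R P^{\infty}$ input, and note that the Tor terms vanish because the torus has free homology. Your version is slightly more explicit about why K\"unneth with a twist on one factor is legitimate and about the exact number $\binom{g-1}{2}+1$ of $\Z_2$ summands, but the argument is the same.
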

\begin{proof}
 By the Kunneth formula for local coefficients~\cite{Bre}, $$
 (*)\ \ \  H_2(\bA(N);\tilde{\mathcal O})=H_0(T^{g-1})\otimes H_2(\R P^{\infty};\mathcal O)$$
$$\oplus H_1(T^{g-1})\otimes H_1(\R P^{\infty};\mathcal O)$$
$$\oplus H_2(T^{g-1})\otimes H_0(\R P^{\infty};\mathcal O).$$ The Tor part of the Kunneth formula is trivial since the second factors has torsion free homology groups.
Thus, taking into account Proposition~\ref{even-odd} we obtain $$H_2(A(N);\tilde{\mathcal O})=H_2(\R P^{\infty};\mathcal O)\oplus (\Z_2\otimes H_2(T^{g-1}))=
\Z_2\oplus H_2(T^{g-1};\Z_2)=\oplus\Z_2.$$
\end{proof}

For a topological abelian group $A$ we denote by $\mu_A=\mu:A\times A\to A$ 
the continuous group homomorphism defined by the formula $\mu(a,b)=a-b$. 
\begin{prop}\label{pull-back0}
Let $N=\R P^2$. Then the pull-back $(j\times j)^*\mu^*(\mathcal O)$ is the $\Z$-orientation sheaf for the manifold $\R P^2\times \R P^2$
where $\mu=\mu_{\bA(\R P^2)}$ and $\mathcal O$ is the canonical $\Z$-bundle over $\bA(\R P^2)=\R P^{\infty}$. 
\end{prop}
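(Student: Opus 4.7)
The plan is to show that both $\Z$-bundles on $\R P^2 \times \R P^2$ are classified by the same element of $H^1(\R P^2\times\R P^2;\Z_2)$, and hence are isomorphic. Recall that local systems with fiber $\Z$ over a connected CW complex $X$ are classified by $H^1(X;\Z_2) = [X, B\Aut(\Z)] = [X, \R P^\infty]$, since $\Aut(\Z) = \Z_2$. Under this correspondence, $\mathcal O$ on $\R P^\infty$ is classified by the generator $x \in H^1(\R P^\infty;\Z_2)$ (indeed $\mathcal O$ is the universal $\Z$-bundle), while the orientation sheaf on $\R P^2 \times \R P^2$ is classified by $w_1(\R P^2 \times \R P^2) = a_1 + a_2$, where $a_i$ denotes the pull-back to $\R P^2 \times \R P^2$ of the generator of $H^1(\R P^2;\Z_2)$ from the $i$-th factor.

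The first key step is to compute $\mu^*(x) \in H^1(\R P^\infty \times \R P^\infty;\Z_2)$. Under the identification $\bA(\R P^2) \simeq \R P^\infty = K(\Z_2, 1)$, the topological abelian group structure on $\bA(\R P^2)$ induces the standard $H$-space structure on $K(\Z_2, 1)$, namely addition in $\Z_2$. Since subtraction and addition coincide modulo $2$, the map $\mu(a,b) = a - b$ realizes this $H$-multiplication, so $\mu^*(x) = x_1 + x_2$ (for any $H$-space structure on $K(\pi, 1)$ with $\pi$ abelian, the induced map on the fundamental cohomology class is the sum of the two projections). The second step is routine: by the Dold--Thom theorem, $j : \R P^2 \to \bA(\R P^2) \simeq \R P^\infty$ realizes the Hurewicz map on $\pi_1$, which is an isomorphism $\Z_2 \to \Z_2$, so $j^*(x) = a$. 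Combining these,
\[
(j\times j)^*\mu^*(x) = (j\times j)^*(x_1 + x_2) = a_1 + a_2 = w_1(\R P^2 \times \R P^2),
\]
so the two $\Z$-bundles have equal classifying classes and are therefore isomorphic.

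The main delicate point is verifying that $\mu$ induces the standard $H$-structure on $K(\Z_2, 1)$; this amounts to the statement that the Dold--Thom equivalence $\bA(\R P^2) \simeq \R P^\infty$ is compatible with the topological group structures up to homotopy, which is standard but deserves explicit mention. Granting this, the argument reduces to the product formula for $w_1$ combined with the classification of $\Z$-bundles by mod-$2$ cohomology.
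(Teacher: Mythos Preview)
Your proof is correct and essentially coincides with the paper's argument. The paper actually defers this proposition to a forward reference (the general case for any non-orientable surface $N$), where it compares the two sheaves by comparing the representations $\pi_1(\bA(N)\times\bA(N))\to\Aut(\Z)=\Z_2$ directly: the representation defining $\mu^*\tilde{\mathcal O}$ is $p\circ\mu_*$, and that defining $\tilde{\mathcal O}\hat\otimes\tilde{\mathcal O}$ is the addition map composed with $p\times p$; these agree on generators. For $N=\R P^2$ this is exactly your computation $\mu^*(x)=x_1+x_2$, just phrased in terms of $\pi_1$-representations rather than the equivalent language of classifying classes in $H^1(-;\Z_2)$. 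Your direct treatment of the $\R P^2$ case is self-contained and avoids the forward reference, at the cost of not immediately generalizing; the paper's version handles all $N_g$ at once but requires the projection $p:\Z_2\oplus\Z^{g-1}\to\Z_2$ as an extra bookkeeping layer.
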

We make a forward reference to Proposition~\ref{pull-back} for the proof.

\begin{prop}\label{a}
Let $a\in H_2(\R P^\infty;\mathcal O)$ be a generator. Then $\mu_*(a\otimes a)=0$ where $\mu=\mu_{A(\R P^2)}$.
\end{prop}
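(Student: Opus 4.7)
The plan is to realize $a$ as a pushed-forward fundamental class, route the map $\mu\circ(j\times j)$ through the cofiber $C_{\Delta\R P^2}$, and then combine Theorem~\ref{FTY} with Proposition~\ref{pseudo} to conclude that the resulting degree vanishes. First I would observe that since $j:\R P^2\to\bA(\R P^2)\simeq\R P^\infty$ carries the top cell of $\R P^2$ to the standard $2$-cell of $\R P^\infty$, the class $j_*[\R P^2]$ generates $H_2(\R P^\infty;\mathcal O)=\Z_2$, so one may take $a=j_*[\R P^2]$. The K\"unneth formula together with the identification $\mu^*\mathcal O\cong\mathcal O\boxtimes\mathcal O$ (both local systems have monodromy $(g_1,g_2)\mapsto(-1)^{g_1+g_2}$ on $\Z_2\times\Z_2$) then gives $a\otimes a=(j\times j)_*[\R P^2\times\R P^2]$, so setting
\[
f=\mu\circ(j\times j):\R P^2\times\R P^2\to\R P^\infty,
\]
the claim reduces to showing $f_*[\R P^2\times\R P^2]=0\in H_4(\R P^\infty;\mathcal O)$.

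Next, I would exploit the fact that $f(x,x)=j(x)-j(x)=0$: thus $f$ is constant on $\Delta\R P^2$ and factors through the quotient as $f=\bar f\circ q$, where $q:\R P^2\times\R P^2\to C_{\Delta\R P^2}$. A direct van Kampen computation gives $\pi_1(C_{\Delta\R P^2})=(\Z_2\times\Z_2)/\langle(1,1)\rangle\cong\Z_2$, and the induced map $\bar f_*$ is an isomorphism on $\pi_1$, so $\bar f$ classifies the universal cover of $C_{\Delta\R P^2}$. By Theorem~\ref{FTY}, $\cat(C_{\Delta\R P^2})=3$, hence $\cat(\bar f)\le 3$, and Theorem~\ref{deform} then produces a homotopy of $\bar f$ (and consequently of $f$) into the $3$-skeleton $(\R P^\infty)^{(3)}=\R P^3$.

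Finally I would check that $M=\R P^2\times\R P^2$ and the map $f$ satisfy the hypotheses of Proposition~\ref{pseudo}: the induced map on fundamental groups is the surjection $(g_1,g_2)\mapsto g_1+g_2$, and by Proposition~\ref{pull-back0} the orientation sheaf of $\R P^2\times\R P^2$ is $(j\times j)^*\mu^*\mathcal O=f^*\mathcal O$. Consequently Proposition~\ref{pseudo}, applied to the deformation of $f$ into the $3$-skeleton, delivers $f_*[\R P^2\times\R P^2]=0$. The main obstacle will be the bookkeeping of twisted coefficient systems needed both to realize $a\otimes a$ geometrically and to identify $\mu^*\mathcal O$ with $\mathcal O\boxtimes\mathcal O$; once this is in place, the preceding machinery chains together essentially automatically.
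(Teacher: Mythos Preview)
Your proposal is correct and follows essentially the same route as the paper: realize $a$ as $j_*[\R P^2]$, factor $\mu\circ(j\times j)$ through the cofiber $C_{\Delta\R P^2}$, use Theorem~\ref{FTY} together with the deformation result (the paper packages this as Proposition~\ref{KR}, you use Theorem~\ref{deform} directly) to push the classifying map into the $3$-skeleton, and then conclude $f_*[\R P^2\times\R P^2]=0$ by dimension. The only cosmetic difference is that you invoke Proposition~\ref{pseudo} at the end, whereas only its trivial ``only if'' direction is needed---the vanishing is immediate once $f$ lands in the $3$-skeleton, so the epimorphism and orientation-sheaf checks you perform are not actually required.
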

\begin{proof}
Note that $\pi_1((\R P^2\times\R P^2)/\Delta\R P^2)=\Z_2$
 (see Proposition~\ref{abelinization}). Let $$f:(\R P^2\times\R P^2)/\Delta\R P^2)\to \bA(\R P^2)$$ be a map that induces an isomorphism of the fundamental groups. We claim that the map $\mu\circ(j\times j)$ is homotopic to $f\circ q$ where $q:\R P^2\times\R P^2\to
(\R P^2\times\R P^2)/\Delta\R P^2$ is the quotient map. This holds true since both maps induce the same homomorphism of the fundamental groups.
In view of Theorem~\ref{FTY} and Proposition~\ref{KR} the map $f$ is homotopic to a map with the image in  the 3-dimensional skeleton. Therefore,
$f_*q_*(b\otimes b)=0$ where $b$ is a generator of $H_2(\R P^2;\mathcal O_{\R P^2})=\Z$. Note that $j_*(b)=a$. Then $\mu_*(a\otimes a)=\mu_*(j\times j)_*(b\otimes b)=f_*q_*(b\otimes b)=0$.
\end{proof}
Let $N=N_g=\#_g\R P^2$ be a non-orientable surface of genus $g$.
Let $\pi=\pi_1(N)$ and $G=Ab(\pi)=H_1(N)=\Z_2\oplus\Z^{g-1}$. We recall that by the Dold-Thom theorem~\cite{DT},\cite{Dr1}
the space $\bA(N)$ is homotopy equivalent to $K(G,1)\sim \R P^\infty\times T^{g-1}$. 
\begin{prop}\label{h-h-e}
There is a homomorphism of topological abelian groups $$h:\bA(N_g)\to \bA(\R P^2)\times T^{g-1}$$ which is a homotopy equivalence.
\end{prop}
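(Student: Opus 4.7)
The plan is to invoke the universal property of the free abelian topological group: any pointed continuous map $\phi\colon N_g\to A$ to a topological abelian group $A$ extends uniquely to a continuous homomorphism $\bA(N_g)\to A$. I will take $A=\bA(\R P^2)\times T^{g-1}$, construct a suitable $\phi$, and show the resulting homomorphism $h$ is a homotopy equivalence.

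Observe first that by Dold-Thom, $\pi_i(\bA(\R P^2))=H_i(\R P^2)$, which is $\Z_2$ in degree $1$ and vanishes in degree $\ge 2$; hence $\bA(\R P^2)$ is a $K(\Z_2,1)$ and $A$ is a $K(\Z_2\oplus\Z^{g-1},1)$. By the standard theory of Eilenberg-MacLane spaces there exists a map $\phi\colon N_g\to A$, unique up to homotopy, whose induced homomorphism on $\pi_1$ equals the abelianization $\pi_1(N_g)\twoheadrightarrow H_1(N_g)=\Z_2\oplus\Z^{g-1}$ under the canonical identification $\pi_1(A)=\Z_2\oplus\Z^{g-1}$. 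Let $h\colon\bA(N_g)\to A$ be the homomorphism extending $\phi$.

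To conclude that $h$ is a homotopy equivalence, note that both sides are aspherical with the same fundamental group: by Dold-Thom $\pi_i(\bA(N_g))=H_i(N_g)=0$ for $i\ge 2$ (since $N_g$ is a closed non-orientable surface, $H_2(N_g;\Z)=0$), so $\bA(N_g)$ is also a $K(\Z_2\oplus\Z^{g-1},1)$. Hence any map between the two spaces inducing an isomorphism on $\pi_1$ is a weak, and therefore (both have CW homotopy type) a homotopy, equivalence. From $h\circ j_{N_g}=\phi$, combined with $(j_{N_g})_*$ being the Hurewicz map, equal to the abelianization $\pi_1(N_g)\twoheadrightarrow H_1(N_g)$, and with $\phi_*$ being by construction this same abelianization, we deduce $h_*$ is the identity on $H_1(N_g)=\pi_1(\bA(N_g))$. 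The argument is essentially formal; the only real content is the Dold-Thom computation and the vanishing $H_2(N_g)=0$, with no genuine obstacle to overcome.
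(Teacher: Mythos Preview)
Your proof is correct and follows the same overall strategy as the paper: produce a map $N_g\to \bA(\R P^2)\times T^{g-1}$, extend it to a homomorphism via the universal property of $\bA(N_g)$, and conclude it is a homotopy equivalence because both sides are $K(\Z_2\oplus\Z^{g-1},1)$'s and the induced map on $\pi_1$ is an isomorphism.

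The one difference is in how the map on $N_g$ is obtained. You invoke the abstract classification of maps into an Eilenberg--MacLane space to get a pointed $\phi$ realizing the abelianization on $\pi_1$; the paper instead builds the map explicitly by hand, splitting into the cases $g$ odd ($N_g=M_{(g-1)/2}\#\R P^2$) and $g$ even ($N_g=M_{(g-2)/2}\#K$), collapsing the connecting circle, and mapping each summand separately. Your route is shorter and avoids the case distinction; the paper's explicit construction is not used elsewhere, so nothing is lost by your more abstract argument.
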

\begin{proof}
Since the spaces are $K(G,1)$s, it suffices to construct a homomorphism that induces an isomorphism of the fundamental groups.
For that it suffices to construct a map $f:N_g\to \bA(\R P^2)\times T^{g-1}$ that induces an isomorphism of integral 1-dimensional
homology groups.
Then the homomorphism $h$ is an extension to $\bA(N_g)$ which exists by the universal property of  free abelian topological groups.
We consider two cases.

(1) If $g$ is odd, then $N_g=M_{(g-1)/2}\#\R P^2$. We define $f$ as the composition
$$
M_{(g-1)/2}\#\R P^2\stackrel{q}\to M_{(g-1)/2}\vee\R P^2\stackrel{\phi\vee j}\rightarrow T^{g-1}\vee \bA(\R P^2)\stackrel{i}\to T^{g-1}\times \bA(\R P^2)$$
where $q$ is  collapsing of the separating circle in the connected sum, $\phi$ is a map that induces isomorphism of 1-dimensional homology, and $i$ is the inclusion.
It is easy to check that $f$ induces an isomorphism $f_*:H_1(N_g)\to H_1(\bA(\R P^2)\times T^{g-1})$.

(2) If $g$ is even, then $N_g=M_{(g-2)/2}\#K$ where $K$ is the Klein bottle. There is a homotopy equivalence $s:\bA(K)\to S^1\times \bA(\R P^2)$.
We define $f$ as the composition
$$
M_{(g-2)/2}\#K\stackrel{q}\to M_{(g-1)/2}\vee K\stackrel{\phi\vee s\circ j}\rightarrow T^{g-2}\vee (S^1\times\bA(\R P^2)) \stackrel{i}\to T^{g-2}\times S^1\times \bA(\R P^2)$$
where $q$ is  collapsing of the connecting circle, $\phi$ is a map that induces isomorphism of 1-dimensional homology, and $i$ is the inclusion.
One can check that $f$ induces an isomorphism $f_*:H_1(N_g)\to H_1(\bA(\R P^2)\times T^{g-1})$.
\end{proof}

\begin{prop}\label{product mu}
For abelian topological groups $A$ and $B$, $\mu_{A\times B}=\mu_A\times\mu_B$.
\end{prop}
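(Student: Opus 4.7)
The plan is to simply unwind the definitions and observe that both sides compute the componentwise difference after the canonical reordering of factors. Recall that for an abelian topological group $C$, the map $\mu_C : C \times C \to C$ is given by $\mu_C(c_1, c_2) = c_1 - c_2$. Thus on the domain of $\mu_{A \times B}$, a typical element is a pair $((a_1, b_1), (a_2, b_2)) \in (A \times B) \times (A \times B)$, and
\[
\mu_{A \times B}\bigl((a_1, b_1), (a_2, b_2)\bigr) = (a_1, b_1) - (a_2, b_2) = (a_1 - a_2, \, b_1 - b_2),
\]
since subtraction in the product group is by definition componentwise.

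On the other hand, $\mu_A \times \mu_B$ is naturally defined on $(A \times A) \times (B \times B)$ and sends $((a_1, a_2), (b_1, b_2))$ to $(\mu_A(a_1, a_2), \mu_B(b_1, b_2)) = (a_1 - a_2, \, b_1 - b_2)$. Identifying $(A \times B) \times (A \times B)$ with $(A \times A) \times (B \times B)$ via the canonical homeomorphism that swaps the middle two factors (this is the identification implicit in writing $\mu_A \times \mu_B$ as a map out of $(A \times B) \times (A \times B)$), the two expressions agree on every element.

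There is no real obstacle here: the statement is a formal consequence of the fact that the group operation on a product of abelian topological groups is defined componentwise, together with the continuity of the middle-swap homeomorphism $(A \times B) \times (A \times B) \to (A \times A) \times (B \times B)$. The role of the proposition in what follows is to let one transport the computation of $\mu_*$ on homology through the splitting $\bA(N_g) \simeq \bA(\R P^2) \times T^{g-1}$ provided by Proposition~\ref{h-h-e}, which is why the author isolates it as a separate statement despite its triviality.
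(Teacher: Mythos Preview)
Your proof is correct and is essentially identical to the paper's own argument: both simply compute $\mu_{A\times B}$ on a generic element and observe it equals the componentwise difference, which is exactly $\mu_A\times\mu_B$ after the obvious identification of domains. You are slightly more careful than the paper in making the middle-swap homeomorphism explicit, but the content is the same.
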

\begin{proof} For all $a,a'\in A$ and $b,b'\in B$ we have
$$\mu_{A\times B}(a\times b, a'\times b')=(a-a')\times (b-b')=\mu_A(a)\times\mu_B(b)=(\mu_A\times\mu_B)(a\times b).$$
\end{proof}

\subsection{Twisted fundamental class}
The pull-back of the canonical $\Z$-bundle $\mathcal O$ over $\R P^{\infty}$ under the  projection $\R P^\infty\times T^{g-1}\to \R P^\infty$ defines a local coefficient system $\tilde{\mathcal O}$ on $\bA(N)$ with the fiber $\Z$.
On the $G$-module level the action of the fundamental group on $\Z$ is generated by the projection homomorphism $p:\Z_2\oplus\Z^{g-1}\to\Z_2$.
We note that $\mathcal O_N=j^*(\tilde{\mathcal O})$.

For sheafs $\mathcal A$ and $\mathcal B$ on $X$ and $Y$ we use the notation $\mathcal A\hat\otimes\mathcal B$
for $pr_X^*\mathcal A\otimes pr_Y^*\mathcal B$ where $pr_X:X\times Y\to X$ and $pr_Y:X\times Y\to Y$ are the projections. 
We note that if $\mathcal O_X$ and $\mathcal O_Y$ are the orientation sheafs on manifolds $X$ and $Y$, then
$\mathcal O_X\hat\otimes\mathcal O_Y$ is the orientation sheaf on $X\times Y$.

\begin{prop}\label{cross}
The cross product $[N]_{\mathcal O_N}\times[N]_{\mathcal O_N}$ is a fundamental class for $N\times N$.
\end{prop}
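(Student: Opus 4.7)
The plan is to reduce the statement to an application of the Künneth formula with local coefficients, using the identification of orientation sheaves on a product already noted in the paragraph preceding the proposition.

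First, I would observe that since $\mathcal O_{N\times N}=\mathcal O_N\hat\otimes\mathcal O_N$, what we must show is that $[N]_{\mathcal O_N}\times [N]_{\mathcal O_N}$ generates $H_4(N\times N;\mathcal O_N\hat\otimes\mathcal O_N)\cong\mathbb Z$. To this end, invoke the Künneth formula for local coefficients (Bredon) with sheaves $\mathcal O_N$ on each factor. It yields a short exact sequence whose middle term is $H_4(N\times N;\mathcal O_N\hat\otimes\mathcal O_N)$, with kernel the direct sum of the groups $H_p(N;\mathcal O_N)\otimes H_q(N;\mathcal O_N)$ for $p+q=4$ and cokernel the corresponding Tor summands for $p+q=3$.

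Second, I would do the dimension count. Since $\dim N=2$, all groups $H_i(N;\mathcal O_N)$ with $i>2$ vanish, so the only nontrivial tensor summand in degree $4$ is $H_2(N;\mathcal O_N)\otimes H_2(N;\mathcal O_N)$. For a closed locally orientable $n$-dimensional pseudo-manifold, the top twisted homology group $H_n(N;\mathcal O_N)$ is $\mathbb Z$, generated by the fundamental class $[N]_{\mathcal O_N}$ (this has been recorded already in the paper). So this summand is $\mathbb Z$. For the Tor contribution in total degree $3$, one factor always has index $\ge 3$ or index $0$, so each Tor group is trivial (the index-$0$ twisted homology is a quotient of $\mathbb Z$, hence flat-enough once paired with a torsion factor of degree $3$, which itself vanishes). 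Consequently $H_4(N\times N;\mathcal O_N\hat\otimes\mathcal O_N)\cong\mathbb Z$ and under the Künneth isomorphism the cross product $[N]_{\mathcal O_N}\times [N]_{\mathcal O_N}$ corresponds to the generator $[N]_{\mathcal O_N}\otimes[N]_{\mathcal O_N}$.

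The only point requiring care is to make sure that the Künneth cross product in twisted coefficients is compatible with the pairing $\mathcal O_N\otimes\mathcal O_N\to\mathcal O_N\hat\otimes\mathcal O_N$ that identifies the product of orientation classes with a local orientation on $N\times N$; once this naturality is acknowledged, the proof is essentially a one-line dimension count. I expect no other obstacle.
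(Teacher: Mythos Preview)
Your approach is essentially identical to the paper's: use $\mathcal O_{N\times N}=\mathcal O_N\hat\otimes\mathcal O_N$, apply the K\"unneth formula with local coefficients, and read off that $H_4(N\times N;\mathcal O_{N\times N})\cong H_2(N;\mathcal O_N)\otimes H_2(N;\mathcal O_N)=\Z\otimes\Z$, so the cross product of the two fundamental classes is a generator. One small slip: in handling the Tor contribution you assert that for $p+q=3$ one index is always $\ge 3$ or $=0$, but the pairs $(1,2)$ and $(2,1)$ violate this; the correct reason these Tor terms vanish is that $H_2(N;\mathcal O_N)=\Z$ is torsion-free.
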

\begin{proof}
Note that for the orientation sheaf ${\mathcal O}_{N\times N}$ on the manifold $N\times N$ we have
$H_4(N\times N;{\mathcal O}_{N\times N})=\Z$. The Kunneth formula implies 
 $$ \Z=H_4(N\times N;{\mathcal O}_{N\times N})=H_4(N\times N;\mathcal O_{N}\hat{\otimes}\mathcal O_{N})=H_2(N;\mathcal O_N)\otimes H_2(N;\mathcal O_N)=\Z\otimes\Z.$$ Thus $[N_{\mathcal O_N}]\otimes[N_{\mathcal O_N}]$ is a
generator in $H_4(N\times N;{\mathcal O}_{N\times N})$.
\end{proof}

\begin{prop}
For $\mu=\mu_{\bA(N)}$,
$$\mu^*\tilde{\mathcal O}=\tilde{\mathcal O}\hat\otimes\tilde{\mathcal O}.$$
\end{prop}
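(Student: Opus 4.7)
The plan is to identify both local systems via their monodromy representations, exploiting that $\bA(N) \simeq K(G,1)$ with $G = \Z_2 \oplus \Z^{g-1}$. Since both $\bA(N)$ and $\bA(N)\times\bA(N)$ are aspherical, a local coefficient system with stalk $\Z$ on $\bA(N)\times\bA(N)$ is determined up to isomorphism by its monodromy action of $\pi_1(\bA(N)\times\bA(N)) = G\oplus G$ on $\Z$, and it suffices to show that $\mu^*\tilde{\mathcal O}$ and $\tilde{\mathcal O}\hat\otimes\tilde{\mathcal O}$ define the same representation.

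First I would recall that $\tilde{\mathcal O}$ on $\bA(N)$ is the pull-back under the projection $\bA(N)\simeq\R P^\infty\times T^{g-1}\to\R P^\infty$ of the canonical sheaf $\mathcal O$, which corresponds to the $\Z_2$-module $\Z$ with the flip action. Hence as a $G$-module, $\tilde{\mathcal O}$ is $\Z$ with action factoring through the projection $p:G=\Z_2\oplus\Z^{g-1}\to\Z_2$; an element $g\in G$ acts as multiplication by $(-1)^{p(g)}$.

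Next I would compute the two sides. On the one hand, $\mu_*:G\oplus G\to G$ sends $(g_1,g_2)$ to $g_1-g_2$, so $\mu^*\tilde{\mathcal O}$ is the $G\oplus G$-module $\Z$ with $(g_1,g_2)$ acting as multiplication by $(-1)^{p(g_1-g_2)}$. On the other hand, $\tilde{\mathcal O}\hat\otimes\tilde{\mathcal O} = pr_1^*\tilde{\mathcal O}\otimes pr_2^*\tilde{\mathcal O}$ has stalk $\Z\otimes\Z=\Z$, and $(g_1,g_2)$ acts as the product $(-1)^{p(g_1)}\cdot(-1)^{p(g_2)} = (-1)^{p(g_1)+p(g_2)}$. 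Because $p$ takes values in $\Z_2$, we have $p(g_1-g_2)=p(g_1)+p(g_2)\pmod 2$, so the two monodromy representations coincide.

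There is no serious obstacle here; the only thing to watch is that the identification of local systems with $\pi_1$-modules is valid over the aspherical space $\bA(N)\times\bA(N)$ (so that the monodromy computation suffices), and that the description of $\tilde{\mathcal O}$ via the homomorphism $p$ is compatible with the homotopy equivalence of Proposition~\ref{h-h-e}; once these are in place the equality follows from the mod-$2$ identity $p(g_1-g_2)\equiv p(g_1)+p(g_2)$.
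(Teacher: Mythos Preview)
Your proof is correct and follows essentially the same route as the paper: both arguments identify the two local systems by their monodromy representations $G\oplus G\to\Aut(\Z)=\Z_2$, observing that $p\circ\mu_*$ and the sum $p\times p$ agree because $p(g_1-g_2)=p(g_1)+p(g_2)$ in $\Z_2$. The only cosmetic difference is that the paper verifies the equality on the generating set $(G\times 1)\cup(1\times G)$ while you check it directly for all pairs; also note that asphericity is not actually needed, since locally constant sheaves on any path-connected, locally nice space are classified by their monodromy.
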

\begin{proof}
Let $p:\Z_2\oplus\Z^{g-1}\to\Z_2$ be the projection.
The sheaf on left is defined by the representation $p\mu_*:\pi_1(\bA(N)\times\bA(N))\to Aut(\Z)=\Z_2$. The sheaf on the right is defined by
the representation $$\alpha(p\times p):\pi_1(\bA(N))\times\pi_1(\bA(N))\to\Z_2$$ where $\alpha:\Z_2\times\Z_2\to \Z_2$,
$\alpha(x,y)=x+y$, is the addition homomorphism. 
It is easy to check that these two coincide on the generating set $$(\pi_1(\bA(N))\times 1)\cup(1\times\pi_1(\bA(N)\subset\pi_1(\bA(N)\times\bA(N)).$$
Hence, they coincide on $\pi_1(\bA(N)\times\bA(N))$. Therefore, these sheafs are equal.
\end{proof}

\begin{cor}\label{pull-back}
The pull-back $(j\times j)^*\mu^*(\tilde{\mathcal O})$ is the orientation sheaf for the manifold $N\times N$. 
\end{cor}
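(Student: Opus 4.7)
The plan is to chain together three identifications that have already been established (or noted) in the preceding subsection, so that the proof reduces to a short naturality argument. First I would invoke the preceding proposition to rewrite
\[
(j\times j)^*\mu^*(\tilde{\mathcal O}) \;=\; (j\times j)^*\bigl(\tilde{\mathcal O}\hat\otimes\tilde{\mathcal O}\bigr).
\]

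Next I would use naturality of the external tensor product $\hat\otimes$ under Cartesian products of maps: for $f:X\to X'$ and $g:Y\to Y'$ with sheaves $\mathcal A'$ on $X'$ and $\mathcal B'$ on $Y'$, one has
\[
(f\times g)^*\bigl(\mathcal A'\hat\otimes\mathcal B'\bigr) \;=\; f^*\mathcal A'\hat\otimes g^*\mathcal B',
\]
because $\hat\otimes$ is defined via the projections and pullback commutes with projections and with $\otimes$. Applied to $f=g=j$ and $\mathcal A'=\mathcal B'=\tilde{\mathcal O}$, and using the identity $j^*\tilde{\mathcal O}=\mathcal O_N$ recorded just before Proposition \ref{cross}, this yields
\[
(j\times j)^*\bigl(\tilde{\mathcal O}\hat\otimes\tilde{\mathcal O}\bigr) \;=\; \mathcal O_N\hat\otimes\mathcal O_N.
\]

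Finally I would appeal to the fact, also recorded in that same subsection, that for manifolds $X$ and $Y$ the external tensor $\mathcal O_X\hat\otimes\mathcal O_Y$ is the orientation sheaf $\mathcal O_{X\times Y}$. Applied to $X=Y=N$ this gives $\mathcal O_N\hat\otimes\mathcal O_N=\mathcal O_{N\times N}$, which completes the chain of equalities.

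There is essentially no obstacle here; the content of the corollary has been packaged into the preceding proposition. The only item worth double-checking is the naturality identity for $\hat\otimes$ under $f\times g$, which is immediate from the definition $\mathcal A\hat\otimes\mathcal B = pr_X^*\mathcal A\otimes pr_Y^*\mathcal B$ together with $pr_{X'}\circ(f\times g)=f\circ pr_X$ and the analogous relation for the second factor.
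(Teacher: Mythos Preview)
Your proof is correct and follows essentially the same approach as the paper: the paper's argument is the single chain of equalities
\[
\mathcal O_{N\times N}=\mathcal O_N\hat\otimes\mathcal O_N=j^*\tilde{\mathcal O}\hat\otimes j^*\tilde{\mathcal O}=(j\times j)^*(\tilde{\mathcal O}\hat\otimes\tilde{\mathcal O})=(j\times j)^*\mu^*(\tilde{\mathcal O}),
\]
which is exactly your three steps read in the opposite direction. You supply slightly more justification for the naturality of $\hat\otimes$ under $f\times g$, but otherwise the arguments are identical.
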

\begin{proof}
Since $\mathcal O_N=j^*\tilde{\mathcal O}$, in view of Proposition~\ref{pull-back}, 
$$\mathcal O_{N\times N}=\mathcal O_n\hat\otimes\mathcal O_N=j^*\tilde{\mathcal O}\hat\otimes j^*\tilde{\mathcal O}=(j\times j)^*(\tilde{\mathcal O}\hat\otimes\tilde{\mathcal O})=(j\times j)^*\mu^*(\tilde{\mathcal O}).$$
\end{proof}

\begin{prop}\label{inverse}
Let $I:\bA(N)\to \bA(N)$, $I(x)=-x$, be the taking the inverse map. Then $I$ fixes every local system $\mathcal M$ on $\bA(N)$ and defines the identity homomorphism in homology $I_*:H_*(\bA(N);\mathcal M)\to H_*(\bA(N);\mathcal M)$.
\end{prop}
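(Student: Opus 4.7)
The strategy is to reduce to the product decomposition from \propref{h-h-e} and analyze inversion on each factor. The homotopy equivalence $h:\bA(N_g)\to\bA(\R P^2)\times T^{g-1}$ is a homomorphism of topological abelian groups, hence intertwines $I$ with $I_1\times I_2$, where $I_1$ is inversion on $\bA(\R P^2)\simeq K(\Z_2,1)$ and $I_2$ is inversion on $T^{g-1}$. On the first factor, $\Z_2$ has exponent~$2$, so $I_1$ induces the identity on $\pi_1$; since $\bA(\R P^2)$ is an Eilenberg--MacLane space, $I_1$ is homotopic to the identity, and it trivially fixes every local system on this factor and induces the identity on homology.

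The substantive work concerns the $T^{g-1}$ factor, where $I_2$ induces $-\id$ on $\pi_1=\Z^{g-1}$. A local system on $\bA(N)$ corresponds to a $G$-module $M$ with $G=\Z_2\oplus\Z^{g-1}$; its pull-back by $I$ is the module $M^I$ in which $g$ acts via $-g$. For the local systems relevant to this paper---those with stalk $\Z$, classified by a homomorphism $G\to\Aut(\Z)=\Z_2$---the monodromy $\rho$ satisfies $\rho(-g)=\rho(g)^{-1}=\rho(g)$ because $\{\pm1\}$ has exponent~$2$, so $I^*\mathcal M=\mathcal M$ literally. For the induced map on homology, I would exploit the K\"unneth formula with local coefficients as in the proof of \propref{tor}: each summand pairs a group $H_i(\R P^\infty;\mathcal O)$ with a $T^{g-1}$-homology group, and by \propref{even-odd} the first factor is either $\Z$ (in degree~$0$, where $I_2$ contributes $+1$) or $2$-torsion, so the sign $(-1)^k$ produced by inversion on $T^{g-1}$ collapses to the identity in every summand.

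The principal obstacle is precisely this last step: for an arbitrary local system and arbitrary degree, the $(-1)^k$ sign from inversion on $T^{g-1}$ is genuinely present, so the conclusion relies on the $2$-torsion structure of the coefficient groups that actually arise. My plan therefore proves the proposition for the class of local systems that appear in the paper's applications---chiefly $\tilde{\mathcal O}$ and its tensor powers---where the monodromy factors through a group of exponent $2$; extending to a completely arbitrary $\mathcal M$ would require further hypotheses, but this restricted version suffices for every subsequent use of the proposition.
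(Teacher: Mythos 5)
Your route diverges from the paper's at exactly the delicate point. The paper's proof is two lines: it asserts that both factor inversions $I^1$ on $\bA(\R P^2)$ and $I^2$ on $T^{g-1}$ are homotopic to the identity and concludes at once that $I$ fixes $\mathcal M$ and $I_*=1$. You instead observe, correctly, that $I^2$ induces $-\id$ on $\pi_1(T^{g-1})=\Z^{g-1}$ and hence is \emph{not} homotopic to the identity for $g\ge 2$; consequently the blanket statement cannot hold for every $\mathcal M$ (already for the trivial system $\Z$ one has $I_*=-\id$ on the $\Z^{g-1}$ part of $H_1(\bA(N);\Z)$, and ``fixes every local system'' also fails for stalks with automorphisms of order $>2$). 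Your repair for the systems with stalk $\Z$ --- note $I^*\mathcal M=\mathcal M$ since $\Aut(\Z)=\Z_2$, then kill the torus sign $(-1)^k$ by torsion of the K\"unneth summands --- is the right idea for $\tilde{\mathcal O}$: by \propref{even-odd} each $H_i(\R P^\infty;\mathcal O)$ is $\Z_2$ or $0$ (in particular $H_0(\R P^\infty;\mathcal O)=\Z_2$, not $\Z$ as you wrote; the paper itself uses $H_0(\bA(\R P^2);\mathcal O)=\Z_2$ in \propref{main}), so every summand of $H_*(\bA(N);\tilde{\mathcal O})$ is $2$-torsion and the sign disappears. So where the paper's argument, taken literally, does not go through for the torus factor, yours does, at the price of weakening the statement.

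Two corrections to your restricted statement are needed, however. First, ``monodromy factoring through a group of exponent $2$'' is not the right class: it contains the trivial system (e.g.\ $\tilde{\mathcal O}\otimes\tilde{\mathcal O}\cong\Z$), for which $I_*\ne\id$ in odd degrees; the clean restricted claims are (i) coefficients whose monodromy is nontrivial on the $\Z_2$ summand of $\pi_1(\bA(N))$, such as $\tilde{\mathcal O}$, so that all homology is $2$-torsion, or (ii) arbitrary coefficients but classes of even total degree on the torus factor, where $(-1)^k=+1$. Second, the later applications in \propref{main} apply the inversion argument not only to classes in $H_*(\bA(N);\tilde{\mathcal O})$ but also to $b$ and $x_i\wedge x_j$ in $H_2(T^{g-1};\Z)$ with untwisted coefficients; there it is the evenness of the degree, case (ii), and not torsion that makes $I_*$ the identity, so your claim that the exponent-$2$ version ``suffices for every subsequent use'' needs that supplement. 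With (i) and (ii) stated explicitly, your argument is complete and covers all the uses made of the proposition.
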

\begin{proof}
In view of Proposition~\ref{h-h-e} it suffices to prove it for $\bA(\R P^2)\times T^k$. Note that the inverse homomorphism $I:\bA(\R P^2)\times T^k\to \bA(\R P^2)\times T^k$ is the product of the inverse homomorphisms $I^1$ and $I^2$ for $\bA(\R P^2)$ and $T^k$ respectively. 
Also note that both $I^1:\bA(\R P^2)\to \bA(\R P^2)$ and $I^2:T^k\to T^k$ are  homotopic to the identity.
Thus, $I$ defines the identity automorphism of the fundamental group and, hence,  fixes $\mathcal M$. Then the homomorphism $I_*:H_*(\bA(N);\mathcal M)\to H_*(\bA(N);\mathcal M)$ is defined and $I_*=1$.
\end{proof}

\begin{prop}\label{main}
For a non-orientable surface $N$ the homomorphism
$$
(\mu_{\bA(N)})_*(j\times j)_*:H_4(N\times N;\mathcal O_{N\times N})\to H_4(\bA(N);\tilde{\mathcal O})
$$
is well-defined and
$$(\mu_{\bA(N)})_*(j\times j)_*([N\times N]_{{\mathcal O}_{N\times N}})= 0$$
where $[N\times N]_{{\mathcal O}_{N\times N}}\in H_4(N\times N;{\mathcal O}_{N\times N})$ is the fundamental class.
\end{prop}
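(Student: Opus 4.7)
The plan is to carry the computation across the homotopy equivalence $h\colon\bA(N)\to\bA(\R P^2)\times T^{g-1}$ of Proposition~\ref{h-h-e} and use $\mu_{A\times B}=\mu_A\times\mu_B$ (Proposition~\ref{product mu}) to replace $\mu_{\bA(N)}$ by $\mu^1\times\mu^2$, where $\mu^1=\mu_{\bA(\R P^2)}$ and $\mu^2=\mu_{T^{g-1}}$. Setting $f=h\circ j\colon N\to\bA(\R P^2)\times T^{g-1}$ and applying the Künneth formula for twisted coefficients together with Proposition~\ref{even-odd} (which forces the middle Künneth term to vanish since $H_1(\R P^\infty;\mathcal O)=0$), I obtain
\[
f_*[N]_{\mathcal O_N}\;=\;\xi+\eta,\qquad \xi=\alpha\otimes 1_T,\ \eta=1_R\otimes\beta,
\]
with $\alpha\in H_2(\R P^\infty;\mathcal O)=\Z/2$, $1_R\in H_0(\R P^\infty;\mathcal O)=\Z/2$, $1_T\in H_0(T^{g-1})$, and $\beta\in H_2(T^{g-1};\Z)=\Lambda^2\Z^{g-1}$. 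By Proposition~\ref{cross} and naturality of the cross product,
\[
(f\times f)_*([N\times N]_{{\mathcal O}_{N\times N}})\;=\;f_*[N]\times f_*[N]\;=\;\xi\times\xi+\xi\times\eta+\eta\times\xi+\eta\times\eta,
\]
and the class to compute is the image of this sum under $(\mu^1\times\mu^2)_*\circ\tau_*$, where $\tau$ is the reordering that moves the two $\bA(\R P^2)$-factors to the left.

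I then analyze each summand. The pure $\bA(\R P^2)$-contribution $\xi\times\xi$ maps to $\mu^1_*(\alpha\times\alpha)\otimes 1_T$, which is $0$ by Proposition~\ref{a}. The two mixed terms $\xi\times\eta$ and $\eta\times\xi$ each contribute $\alpha\otimes\beta$: here I use that $\mu^i(\cdot,e)=\mathrm{id}$, that $\mu^1(e,\cdot)=I^1$ is homotopic to the identity on $\bA(\R P^2)\simeq K(\Z/2,1)$, and that the inverse $I^2$ on $T^{g-1}$ acts by $(-1)^2=+1$ on $H_2$. Their sum equals $2(\alpha\otimes\beta)=0$ since $\alpha$ has order~$2$. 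The last term $\eta\times\eta$ maps to $1_R\otimes\mu^2_*(\beta\times\beta)=1_R\otimes(\beta\wedge\beta)$ in $H_0(\R P^\infty;\mathcal O)\otimes\Lambda^4\Z^{g-1}$, where $\beta\wedge\beta$ is the Pontryagin/exterior square.

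The crux lies in the $\eta\times\eta$ term: writing $\beta=\sum_{i<j}c_{ij}e_i\wedge e_j$ and using that even-degree classes commute in the exterior algebra, each unordered pair of distinct basis monomials contributes twice in $\beta\wedge\beta$, so $\beta\wedge\beta\in 2\Lambda^4\Z^{g-1}$. Since $1_R$ has order~$2$, this forces $1_R\otimes(\beta\wedge\beta)=0$. Summing the four contributions yields the proposition.

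The main obstacle is keeping the twisted Künneth bookkeeping honest through the permutation $\tau_*$ and its Koszul signs, and identifying the correct homotopy classes of $\mu^i(\cdot,e)$ and $\mu^i(e,\cdot)$ on each factor; the decisive elementary fact is the divisibility by $2$ of integer exterior squares of $2$-forms, which is exactly what kills the torus contribution.
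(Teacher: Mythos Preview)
Your proof is correct and follows essentially the same route as the paper: transport the problem via the homotopy equivalence to $\bA(\R P^2)\times T^{g-1}$, decompose $j_*[N]$ by K\"unneth as $\xi+\eta$, expand the fourfold cross product, and kill each piece using Proposition~\ref{a}, the $2$-torsion of $H_*(\R P^\infty;\mathcal O)$, and the evenness of the exterior square $\beta\wedge\beta$. The paper additionally records the well-definedness of the homomorphism via Corollary~\ref{pull-back}, and treats the torus term $\mu^2_*(b\times b)$ by a case split on $|\{i,j,k,l\}|$ (dimensional vanishing when indices overlap, pairing-by-two when disjoint) rather than your direct identification $\mu^2_*(\beta\times\beta)=\beta\wedge\beta\in 2\Lambda^4\Z^{g-1}$; otherwise the arguments coincide.
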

\begin{proof}
By Proposition~\ref{pull-back}, $\mathcal O_{N\times N}=(j\times j)^*\mu^*(\tilde{\mathcal O})$ and, hence, the homomorphism
$$
\mu_*(j\times j)_*:H_4(N\times N;\mathcal O_{N\times N})\to H_4(\bA(N);\tilde{\mathcal O})
$$
is well-defined.

As before, we replace $\bA(N)$ by $\bA(\R P^2)\times T^{g-1}$.

By Proposition~\ref{cross} the cross product $$[N]_{\mathcal O_N}\times[N]_{\mathcal O_N}\in H_4(N\times N;\mathcal O_N\hat\otimes\mathcal O_N)$$ is
a fundamental class:  $[N\times N]_{{\mathcal O}_{N\times N}}=\pm [N]_{\mathcal O_N}\times[N]_{\mathcal O_N}$. 

Since $\mathcal O_N=j^*\tilde{\mathcal O}$, the homomorphism $j_*:H_2(N;\mathcal O_N)\to H_2(\bA(N);\tilde{\mathcal O})$ is well-defined.
In view of the Kunneth formula (see (*) in the proof of Proposition~\ref{tor}) we obtain
$$j_*([N]_{\mathcal O_N})=a\otimes 1_B+1_A\otimes b\in (H_2(A(\R P^2);\mathcal O_N)\otimes\Z)\oplus (\Z_2\otimes H_2(T^{g-1}))= H_2(A(N);\tilde{\mathcal O})$$
with $a\in H_2(\bA(\R P^2);\mathcal O)=H_2(\R P^{\infty};\mathcal O)$ being the generator, $b\in H_2(T^{g-1};\Z)$, and generators $1_A\in H_0(\bA(\R P^2);\mathcal O)=\Z_2$ and $1_B\in H_0(T^{g-1};\Z)=\Z$. 
Let $\bar a=a\otimes 1_B$ and $\bar b=1_A\otimes b$.

We apply Proposition~\ref{product mu} with $\mu=\mu_{\bA(\R P^2)\times T^{g-1}}$ and $[N]=[N]_{\mathcal O_N}$ to obtain: 
$$\mu_*(j\times j)_*([N]\times [N])=\mu_*(j_*([N])\times j_*([N])=\mu_*((\bar a+\bar b)\times(\bar a+\bar b))$$
$$=\mu_*(\bar a\times \bar a+\bar a\times \bar b+\bar b\times \bar a+\bar b\times \bar b)=\mu_*(\bar a\times\bar a)+\mu_*(\bar a\times \bar b+\bar b\times\bar a)+\mu_*(\bar b\times\bar b)$$
$$
=\mu^1_*(a\times a)\times 1_B+\mu_*^1(a\times 1_A)\times\mu^2_*(1_B\times b)+\mu_*^1(1_A\times a)\times\mu^2_*(b\times 1_B)+1_A\times\mu^2_*(b\times b)
$$
where $\mu^1=\mu_{\bA(\R P^2)}$ and $\mu^2=\mu_{T^{g-1}}$. By Proposition~\ref{a}, $\mu^1_*(a\times a)\times 1_B=0$.

We recall that a $\Z$-twisted homology class in a space $X$ with a local system $p:E\to X$ is defined by a cycle in $X$ with coefficients in the sections of
$p$ on 
(singular) simplices in $X$. One can assume that the sections are taken in the $\pm 1$-subbundle of the $\Z$-bundle $p$. This implies that every homology class
is represented by a continuous map $f:M\to X$ of a pseudo-manifold that admits a lift $f':M\to E$ with value in the $\pm 1$-subbundle of $p$. 

One can show that the homology class $a\in H_2(\bA(\R P^2);\mathcal O)$ is realized by a map $f:S^2\to\bA(\R P^2)$ that admits a lift to the $\pm 1$-subbundle of $\mathcal O$. The homology class
$1_A\in H_0(\bA(\R P^2);\mathcal O)$ can be realized by the point representing the unit $0\in \bA(\R P^2)$. Then the class $\mu_*^1(a\times 1_A)$ is realized by the 
same map $$f=\mu^1(f,0):S^2\to\bA(\R P^2),$$ i.e., $\mu_*^1(a\times 1_A)=a$, whereas
the class $\mu_*^1(1_A\times a)$ is realized by the composition $I\circ f$. By  Proposition~\ref{inverse}, $\mu_*^1(1_A\times a)=I_*(a)=a$.
Similarly,
$\mu^2_*(b\times 1_B)=b$, and $\mu^2_*(1_B\times b)=I_*(b)=b$.
Therefore, $\mu_*(\bar a\times \bar b+\bar b\times \bar a)=2(a\times b)$ is divisible by 2.
Then by Proposition~\ref{tor}, $\mu_*(\bar a\times \bar b+\bar b\times\bar a)=0$.

Next we show that $\mu_*^2(b\times b)=0$.  In view of Proposition~\ref{tor} it suffices to show that
 $\mu^2_*(b\times b)=0$ mod 2.
We recall that the Pontryagin product defines the structure of an exterior algebra on $H_*(T^{r})=\Lambda[x_1,\dots, x_r]$.
Thus, $b=\sum_{i<j}\lambda_{ij}x_i\wedge x_j$. Then $$b\times b=\sum_{i<j,k<l}\lambda_{ij}\lambda_{kl}(x_i\wedge x_j)\times(x_k\wedge x_l).$$

Let $\{i,j\}\cap\{k,l\}=\emptyset$. By the argument similar to the above using  Propositions~\ref{product mu} and~\ref{inverse} we obtain that 
$$\mu^2_*((x_i\wedge x_j)\times(x_k\wedge x_l)+(x_k\wedge x_l)\times(x_i\wedge x_j))$$ is divisible by 2.

If $|\{i,j,k,l\}|\le 3$, then the problem can be reduced to a 3-torus. Then
$$\mu^2_*((x_i\wedge x_j)\times(x_k\wedge x_l))=0$$ by the dimensional reason.
\end{proof}

\subsection{Inessentiality of the cofiber}

\begin{prop}\label{abelinization}
For a connected CW complex $X$  the fundamental group $G=\pi_1(X\times X/\Delta X)$ is isomorphic to the abelianization
of $\pi_1(X)$ and the induced homomorphism $\pi_1(X\times x_0)\to\pi_1((X\times X)/\Delta X)$ is  surjective.
\end{prop}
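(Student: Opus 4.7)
The plan is to compute $\pi_1((X\times X)/\Delta X)$ by applying van Kampen's theorem to the mapping cone of $\Delta$, and then to identify the resulting quotient with the abelianization $\pi^{ab}$ of $\pi=\pi_1(X)$. Since $X$ is a CW complex, the diagonal embedding $\Delta:X\to X\times X$ is a cofibration, so $(X\times X)/\Delta X$ is homotopy equivalent to the mapping cone $C=(X\times X)\cup_{\Delta}CX$. I would cover $C$ by the complement $U$ of the cone vertex (which deformation retracts onto $X\times X$) and a contractible open neighborhood $V$ of the vertex, so that $U\cap V$ deformation retracts onto a copy of $X$ embedded as the diagonal. Van Kampen then yields
\[
\pi_1(C)\;\cong\;\pi_1(X\times X)\big/\,N\bigl(\Delta_*\pi_1(X)\bigr)\;=\;(\pi\times\pi)/N,
\]
where $N$ denotes the normal closure of the diagonal subgroup $\{(g,g):g\in\pi\}\subset\pi\times\pi$.

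Next I would construct an explicit isomorphism $(\pi\times\pi)/N\cong\pi^{ab}$. Define $\phi:\pi\times\pi\to\pi^{ab}$ by $(g,h)\mapsto[g]-[h]$, written additively. Since $\pi^{ab}$ is abelian, $\phi$ is a homomorphism; it obviously vanishes on the diagonal and hence on $N$, and it is surjective. For the reverse inclusion $\ker\phi\subset N$, given $(g,h)$ with $gh^{-1}\in[\pi,\pi]$ one writes $(g,h)=(gh^{-1},1)\cdot(h,h)$, which reduces matters to showing $[\pi,\pi]\times\{1\}\subset N$. For a single commutator $c=aba^{-1}b^{-1}$ this is witnessed by the identity
\[
(aba^{-1}b^{-1},1)\;=\;\bigl((a,1)(b,b)(a,1)^{-1}\bigr)\cdot(b^{-1},b^{-1}),
\]
which displays $(c,1)$ as the product of a conjugate of the diagonal element $(b,b)$ with the diagonal element $(b^{-1},b^{-1})$. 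Closure of $N$ under products and inverses then extends this to all of $[\pi,\pi]\times\{1\}$, giving $\ker\phi=N$ and the desired isomorphism.

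Finally, the inclusion $X\times\{x_0\}\hookrightarrow X\times X$ induces the homomorphism $\pi\to\pi\times\pi$, $g\mapsto(g,1)$; composing with the isomorphism $(\pi\times\pi)/N\cong\pi^{ab}$ recovers the abelianization map $g\mapsto[g]$, which is surjective, proving the second assertion. The main obstacle in this plan is verifying $\ker\phi\subset N$, i.e.\ pushing commutators of $\pi$ (as elements of the first factor) into the normal closure of the diagonal; once the two-line conjugation identity above is in hand, the remainder is formal van Kampen bookkeeping.
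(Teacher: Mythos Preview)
Your proof is correct. Both you and the paper identify $\pi_1((X\times X)/\Delta X)$ with $(\pi\times\pi)/N$, where $N$ is the normal closure of the diagonal---you via van Kampen on the mapping cone, the paper via the observation that the quotient map $q$ has connected point-preimages (hence $q_*$ is onto) with kernel $N$---and both then show this quotient equals $\pi^{ab}$. The organizational difference lies in which inclusion carries the content. The paper first shows $G$ is abelian by noting that images of first-factor and second-factor elements commute (they already commute in $\pi\times\pi$), and then proves that any $(x,1)\in N$, written as a product of conjugates $(a_i^{y_i},a_i^{z_i})$ of diagonal elements, must have $x\in[\pi,\pi]$ by reading off the second coordinate. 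You instead build the map $\phi:(g,h)\mapsto[g]-[h]$ to $\pi^{ab}$ and prove $\ker\phi\subset N$ by exhibiting each $([a,b],1)$ explicitly as the product of a conjugate of $(b,b)$ with $(b^{-1},b^{-1})$. These are essentially dual computations: the paper extracts a commutator from a normal-closure word, while you push a commutator into one. Your route has the mild advantage that the isomorphism with $\pi^{ab}$ and the surjectivity from $\pi_1(X\times x_0)$ drop out simultaneously from the single map $\phi$; the paper's route avoids invoking the cofibration/mapping-cone identification.
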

\begin{proof}
Let $q:X\times X\to(X\times X)/\Delta X$ be the quotient map. Since $q$ has connected point preimages,
it induces an epimorphism of the fundamental groups. Suppose that $g=q_*(a,b)$, $g\in G$, $(a,b)\in\pi_1(X)\times\pi_1(X)=\pi_1(X\times X)$. Then $$g=q_*(a,b)=q_*(b,b)q_*(b^{-1}a,1)=eq_*(b^{-1}a,1)$$ 
where $e\in G$ is the unit. This proves the second part.

Let $g,h\in G$. By the second part of the proposition, $g=q_*(a,1)$ and $h=q_*(1,b)$. Since $(a,1)(1,b)=(a,b)=(1,b)(a,1)$, we obtain $gh=hg$.
Thus, $G$ is abelian. We show that $\pi_1(X\times x_0)\to\pi_1((X\times X)/\Delta X)$ is the abelianization homomorphism.

Note that the kernel of $q_*$ is the normal closure of the diagonal subgroup $\Delta\pi_1(X)$ in $\pi_1(X)\times\pi_1(X)$.
Thus every element $(x,1)\in K=ker\{\pi_1(X\times x_0)\to\pi_1((X\times X)/\Delta X)\}$ can be presented as the product
$$
(x,1)=(a_1^{y_1},a_1^{z_1})(a_2^{y_2},a_2^{z_2})\cdots(a_n^{y_n},a_n^{z_n})
$$
for $a_i,y_i,z_i\in\pi_1(X)$ where $a^g=gag^{-1}$. This equality implies that $$(a_n^{-1})^{z_n}(a_{n-1}^{-1})^{z_{n-1}}\cdots(a_1^{-1})^{z_1}=1.$$
Then $x=(a_n^{-1})^{z_n}\cdots(a_1^{-1})^{z_1}a_1^{y_1}\cdots a_n^{y_n}$ lies in the kernel of the abelianization map. Therefore, $K\subset [\pi_1(X),\pi_1(X)]$.
\end{proof}

\begin{prop}
For any $g$ the pseudo-manifold $(N_g\times N_g)/\Delta N_g$ is locally orientable and inessential.
\end{prop}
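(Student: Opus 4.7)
The plan is to apply Proposition~\ref{pseudo} to $X=(N_g\times N_g)/\Delta N_g$ with the classifying map $f\colon X\to\bA(N_g)\simeq BG$ obtained by factoring $\mu_{\bA(N_g)}\circ(j\times j)$ through the quotient $q\colon N_g\times N_g\to X$; this factorization is legitimate because $\mu(j(x),j(x))=0$, so $\Delta N_g$ collapses to the identity element of $\bA(N_g)$. Proposition~\ref{abelinization} identifies $\pi_1(X)$ with $G:=\Z_2\oplus\Z^{g-1}\cong\pi_1(\bA(N_g))$, and evaluating on generators of the form $q_*(a,1)$ shows $f_*=\id$, so $f$ is the classifying map of the universal cover.

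First I would verify local orientability. Away from the singular point $p_0=[\Delta N_g]$, the space $X$ is homeomorphic to the open $4$-manifold $N_g\times N_g\setminus\Delta N_g$, so the orientation sheaf is locally constant with stalk $\Z$. At $p_0$ a tubular neighborhood in $X$ is the Thom space of the normal bundle of the diagonal, which is canonically identified with $TN_g$; the Thom isomorphism with local coefficients together with Poincar\'e duality on $N_g$ yield
$$H_4(X,X\setminus p_0)\;\cong\;\widetilde H_4(Th(TN_g))\;\cong\;H_2(N_g;\mathcal O_{N_g})\;\cong\;\Z.$$
Local constancy of $\mathcal O_X$ near $p_0$ will follow a posteriori from the identification $\mathcal O_X=f^*\tilde{\mathcal O}$ established in the next step.

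Next I would verify the two remaining hypotheses of Proposition~\ref{pseudo}. Corollary~\ref{pull-back} gives
$$\mathcal O_{N_g\times N_g}=(j\times j)^*\mu^*\tilde{\mathcal O}=q^*f^*\tilde{\mathcal O},$$
so on the dense manifold part $X\setminus p_0$ the locally constant sheaf $f^*\tilde{\mathcal O}$ agrees with $\mathcal O_X$; because $X$ is connected both locally constant sheaves extend uniquely, and they agree globally. Since $q$ is a homeomorphism on the top-dimensional stratum, $q_*([N_g\times N_g]_{\mathcal O_{N_g\times N_g}})=\pm[X]_{\mathcal O_X}$, whence
$$f_*([X])\;=\;(\mu\circ(j\times j))_*([N_g\times N_g])\;=\;0$$
by Proposition~\ref{main}. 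Proposition~\ref{pseudo} then deforms $f$ into $BG^{(3)}$, so by definition $X$ is inessential.

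The main technical hurdle is the careful coordination of the three twisted coefficient systems $\mathcal O_{N_g\times N_g}$, $\mathcal O_X$, and $\tilde{\mathcal O}$ along the diagram $N_g\times N_g\xrightarrow{q}X\xrightarrow{f}\bA(N_g)$; once that compatibility is in place and the degree of $q$ is tracked correctly, the vanishing $f_*([X])=0$ is essentially packaged by Proposition~\ref{main}, and inessentiality drops out of Proposition~\ref{pseudo}.
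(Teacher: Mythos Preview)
Your argument is correct and follows the same route as the paper: compute the stalk of the orientation sheaf at the singular point (you phrase it via the Thom isomorphism for $TN_g$, the paper via the Serre spectral sequence of the tangent sphere bundle and the identification $H_4(W,\partial W)\cong H_3(\partial W)$; these are equivalent), produce the classifying map by factoring $\mu_{\bA(N)}\circ(j\times j)$ through the quotient, and then invoke Propositions~\ref{main} and~\ref{pseudo}. Your direct factorization using $\mu(j(x),j(x))=0$ is in fact a bit cleaner than the paper's construction, which builds $f$ abstractly from the induced homomorphism on $\pi_1$ and then argues $f\circ q\simeq\mu\circ(j\times j)$ by comparing $\pi_1$-maps into a $K(G,1)$.

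One small wrinkle: your a-posteriori justification of local constancy of $\mathcal O_X$ near $p_0$ is circular as written, since the ``unique extension of locally constant sheaves'' step already presupposes that $\mathcal O_X$ is locally constant across $p_0$. The paper simply asserts that the stalk computation $H_4(W,\partial W)=\Z$ suffices, and for a cone-type singularity it does (the restriction maps among the cofinal cone neighborhoods of $p_0$ are all isomorphisms), so you can drop the a-posteriori remark and conclude local orientability directly from the stalk computation. Also, the neighborhood $q(W)$ of $p_0$ is not literally the Thom space $Th(TN_g)=W/\partial W$ but rather $W/\Delta N$; nonetheless your displayed isomorphism $H_4(X,X\setminus p_0)\cong\widetilde H_4(Th(TN_g))$ is correct, since both sides equal $H_3(\partial W)$.
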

\begin{proof} We use the notation $N=N_g$.
To check the local orientability it suffices to show that $H_4(W,\partial W)=\Z$ for a regular neighborhood of the diagonal 
$\Delta N$ in $N\times N$.
Since $H_4(W)=H_3(W)=0$, the exact sequence of pair implies $H_4(W,\partial W)=H_3(\partial W)$. 
Note that the boundary $\partial W$ is homeomorphic to the total space of the spherical bundle for the tangent bundle on $N$. 
The spectral sequence for this spherical bundle implies that $$H_3(\partial W)=E^2_{2,1}=H_2(N;\underline{H_1(S^1)})$$ where the local system
$\underline{H_1(S^1)}$ is the orientation sheaf on $N$. Thus, we obtain $H_3(\partial W)=\Z$.

Next, we show that the map $\mu\circ(j\times j)$ is homotopic to $f\circ q$ where $q:N\times N\to (N\times N)/\Delta N$
is the quotient map and $f$ is a map classifying the universal covering  for $(N\times N)/\Delta N$. 
Note that for the fundamental groups, $ker(q_*)$ is the normal closure of the diagonal
$\Delta\pi$ in $\pi\times\pi$. Therefore, $(j\times j)_*(ker(q_*))\subset\Delta(Ab(\pi))=ker(\mu_*)$. Hence
there is a homomorphism $\phi:Ab(\pi)\to Ab(\pi)$ such that $\mu_*\circ(j\times j)_*=\phi q_*$:
$$
\begin{CD}
\pi\times\pi @>q_*>> \pi_1(N\times N/\Delta N)\\
@V(j\times j)_*VV @V\phi VV\\
Ab(\pi)\times Ab(\pi) @>\mu_*>> Ab(\pi).\\
\end{CD}
$$
By Proposition~\ref{abelinization}, $\pi_1(N\times N/\Delta N)=Ab(\pi)=\Z^{g-1}\oplus\Z_2$. Since $\phi$ is surjective the homomorphism $\phi$ is an isomorphism.
The homomorphism $\phi$ can be realized by a map $f:(N\times N)/\Delta N\to \bA(N)$.
Since $f$ induces an isomorphism of the fundamental groups $f$ is a classifying map.
Since the maps $\mu\circ(j\times j)$ and $f\circ q$ with the target space $K(Ab(\pi),1)$ induces isomorphisms of the fundamental groups, they are homotopic.

Finally, we note that the fundamental class of $(N\times N)/\Delta N$ is the image of that of $N\times N$.
Then we apply Proposition~\ref{main} and Proposition~\ref{pseudo}.
\end{proof}

\begin{cor}\label{main2}
$\cat((N\times N)/\Delta N)\le 3$.
\end{cor}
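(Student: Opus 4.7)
The plan is an immediate application of Proposition~\ref{KR} to the preceding proposition. That proposition establishes that $Y := (N\times N)/\Delta N$ is a locally orientable $4$-dimensional pseudo-manifold which is inessential, i.e.\ a classifying map $u\colon Y\to B\pi_1(Y)$ can be deformed into the $3$-skeleton $B\pi_1(Y)^{(3)}$. Proposition~\ref{KR} asserts that for any $n$-dimensional CW complex, inessentiality is equivalent to the bound $\cat \le n-1$. Plugging in $n=4$ and $X=Y$ gives $\cat Y \le 3$, which is precisely the claim.

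The only bookkeeping to check is that $Y$ really is $4$-dimensional as a CW complex. This is immediate: $N\times N$ is a closed $4$-manifold and admits a CW structure in which the diagonal $\Delta N$ is a $2$-dimensional subcomplex, and collapsing a positive-codimension subcomplex does not raise the dimension. Local orientability, needed so that $Y$ has a well-defined fundamental class, has also been verified in the preceding proposition via the spherical bundle computation for the tangent bundle of $N$.

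There is no genuine obstacle at this stage. All the difficulty has been pushed into the preceding proposition, whose proof in turn rests on Proposition~\ref{main} (the vanishing $(\mu_{\bA(N)})_*(j\times j)_*[N\times N]_{\mathcal O_{N\times N}} = 0$ in $H_4(\bA(N);\tilde{\mathcal O})$) and on Proposition~\ref{pseudo} (turning that vanishing into a deformation of the classifying map onto the $3$-skeleton). Alternative attempts, such as producing four categorical open sets by hand, or extracting the bound from twisted cup-length arguments, would either be much more combinatorial or would produce only lower bounds on $\cat$, so the obstruction-theoretic route through Propositions~\ref{KR} and~\ref{pseudo} is by far the cleanest way to conclude.
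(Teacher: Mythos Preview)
Your proof is correct and follows exactly the paper's approach: the paper's entire proof of this corollary is the single sentence ``We apply Proposition~\ref{KR},'' and your write-up unpacks precisely that application (with the $n=4$ bookkeeping made explicit).
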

\begin{proof}
We apply Proposition~\ref{KR}.
\end{proof}

\begin{thm}
For a non-orientable surface of genus $g$,
$$\cat((N_g\times N_g)/\Delta N_g)=3.$$
\end{thm}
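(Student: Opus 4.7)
The upper bound $\cat(C_{\Delta N_g}) \le 3$ is already in hand via Corollary~\ref{main2}, so the plan is only to produce a matching lower bound by exhibiting a nontrivial triple cup product in $H^*(C_{\Delta N_g};\Z_2)$.

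First I would find a class $v \in H^1(N_g;\Z_2)$ with $v^2 \ne 0$. The cleanest route is the pinch map $p:N_g \to \R P^2$ that collapses the connected-sum complement of a single crosscap summand: $p$ has mod~$2$ degree one, so $p^*$ is injective on mod~$2$ cohomology, and $v := p^*(w)$ satisfies $v^2 = p^*(w^2) \ne 0$, where $w$ generates $H^1(\R P^2;\Z_2)$.

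Next I would exploit the standard zero-divisor construction. The class $\bar v := v \times 1 + 1 \times v \in H^1(N_g \times N_g;\Z_2)$ is killed by $\Delta^*$, so from the cofibration long exact sequence of $N_g \xrightarrow{\Delta} N_g \times N_g \xrightarrow{q} C_{\Delta N_g}$ there is a unique $\tilde v \in H^1(C_{\Delta N_g};\Z_2)$ with $q^*(\tilde v) = \bar v$. Since $q^*$ is a ring map, it suffices to show $\bar v^3 \ne 0$. Because $N_g$ is two-dimensional, $v^3 = 0$, and the mod~$2$ binomial expansion collapses to
$$\bar v^3 = v^2 \times v + v \times v^2 \in H^3(N_g \times N_g;\Z_2),$$
whose summands lie in distinct K\"unneth pieces and are each nonzero; hence $\bar v^3 \ne 0$, so $\tilde v^3 \ne 0$ as well. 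The $\Z_2$ cup-length of $C_{\Delta N_g}$ is therefore at least $3$, and Theorem~\ref{cl} yields $\cat(C_{\Delta N_g}) \ge 3$.

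I do not expect this lower-bound half to present any serious difficulty: the only point requiring attention is the existence of a class $v \in H^1(N_g;\Z_2)$ with $v^2 \ne 0$, and this reduces immediately to the well-known non-vanishing of the square of the generator on $\R P^2$. The genuine work of the theorem lies in the matching upper bound, which has already been absorbed by Proposition~\ref{main} and the inessentiality criterion of Proposition~\ref{KR}.
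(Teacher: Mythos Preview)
Your proposal is correct and follows essentially the same approach as the paper: invoke Corollary~\ref{main2} for the upper bound, and for the lower bound take a class $v\in H^1(N_g;\Z_2)$ with $v^2\ne 0$, form the zero-divisor $v\times 1+1\times v$, lift it through $q^*$, and show its cube is $v^2\times v+v\times v^2\ne 0$. The only difference is that you supply an explicit justification (via the pinch map to $\R P^2$) for the existence of such a $v$, which the paper simply asserts.
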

\begin{proof}
We take $x\in H^1(N_g;\Z_2)$ such that $x^2\ne 0$.
Note that $(x\times 1+1\times x)^2=x^2\times 1+1\times x^2$ in $H^*(N_g\times N_g;\Z_2)$.
Then $$(x\times 1+1\times x)^3=(x^2\times 1+1\times x^2)(x\times 1+1\times x)=
x\times x^2+x^2\times x\ne 0.$$
The restriction of $x\times 1+1\times x$ to the diagonal $\Delta N_g\subset N_g\times N_g$ equals $$x\smile 1+1\smile x=2x$$ 
where $\smile$ is the cup product. Since $2x=0$
in $H^*(N_g;\Z_2)$, we obtain $x\times 1+1\times x=q^*(y)$ for some $y\in H^1((N_g\times N_g)/\Delta N_g;\Z_2)$.
Therefore, $y^3\ne 0$ in $H^*((N_g\times N_g)/\Delta N_g;\Z_2)$. By the cup-length estimate (Theorem~\ref{cl}),
$$\cat((N_g\times N_g)/\Delta N_g)\ge 3.$$ This together with Corollary~\ref{main2} implies the required equality.
\end{proof}

\

\section{Higman's group}

Higman's group $H$ has the following properties~\cite{Hi}: $H$ is acyclic and it has finite 2-dimensional Eilenberg-Maclane complex
$K(H,1)$.

\begin{thm}
Let $K=K(H,1)$ where $H$ is Higman's group. Then
$$
2=\cat(C_{\Delta K})< TC(K)=4.
$$
\end{thm}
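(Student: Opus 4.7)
The plan is to verify the three numerical assertions $TC(K)=4$, $\cat(C_{\Delta K})\le 2$, and $\cat(C_{\Delta K})\ge 2$, each needed for the displayed equation. I would handle the $TC$ claim and the two directions of the $\cat$ claim separately, then assemble.

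For $TC(K)=4$: the upper bound $TC(K)\le 2\dim K=4$ follows from the standard estimate for aspherical complexes, while the lower bound $TC(K)\ge 4$ is the deep result quoted from \cite{GLO}; the substantive input there is the construction of a nontrivial zero-divisors cup-length class on $BH\times BH$ using Higman's specific four-generator presentation. I would invoke this verbatim rather than reprove it, since the author explicitly isolates it as the step hidden behind the reference.

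For the upper bound $\cat(C_{\Delta K})\le 2$: I would first identify $\pi_1(C_{\Delta K})$. By Proposition~\ref{abelinization}, $\pi_1(C_{\Delta K})$ is isomorphic to $Ab(H)$; acyclicity of $H$ gives $Ab(H)=H_1(H;\Z)=0$, so $C_{\Delta K}$ is simply connected. Since $\dim K=2$, the product $K\times K$ is $4$-dimensional and $C_{\Delta K}=(K\times K)/\Delta K$ inherits a CW structure of dimension at most $4$. The connectivity--dimension estimate in the second part of Theorem~\ref{cl}, applied to a $1$-connected $4$-dimensional complex, then gives $\cat(C_{\Delta K})\le 4/(1+1)=2$.

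For the lower bound $\cat(C_{\Delta K})\ge 2$: this is the main obstacle, and the step I expect to require genuinely new input. The usual cup-length approach is inapplicable here, because the acyclicity of $K$ propagates through Künneth to $K\times K$ and then through the cofibration long exact sequence $\tilde H_{*}(\Delta K)\to\tilde H_{*}(K\times K)\to\tilde H_{*}(C_{\Delta K})$, forcing $\tilde H_{*}(C_{\Delta K};\Z)=0$, and simple-connectivity removes any access to nontrivial twisted coefficients. A substitute invariant is therefore needed. The natural candidates are: a secondary obstruction of Massey or Toda-bracket type coming from the attaching data of the top cells of $(K\times K)/\Delta K$; a Berstein--Schwarz--style weak category argument detected on the first Ganea fibration $\tilde p_1^{C_{\Delta K}}:G_1(C_{\Delta K})\to C_{\Delta K}$ of Theorem~\ref{ganea}; or an obstruction living in some $\pi_n(C_{\Delta K})$ that survives acyclicity. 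Assembling this with the upper bound and $TC(K)=4$ yields $2=\cat(C_{\Delta K})<4=TC(K)$, producing the desired counterexample to the coincidence $TC=\cat\circ C_{\Delta(-)}$.
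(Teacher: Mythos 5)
Your first two steps coincide with the paper's entire proof: the equality $TC(K)=4$ is simply quoted from \cite{GLO}, and the bound $\cat(C_{\Delta K})\le 2$ is obtained exactly as you do, via Proposition~\ref{abelinization} (so $\pi_1(C_{\Delta K})=H_1(H)=0$) and the connectivity--dimension estimate of Theorem~\ref{cl} applied to the $4$-dimensional, $1$-connected cofiber. Up to this point you are in full agreement with the text.

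The third step is where the problem lies, and it is not the kind of gap you think it is. You correctly observe that acyclicity of $H$ forces $\tilde H_*(C_{\Delta K};\Z)=0$ (Künneth plus the cofibration sequence) and that $C_{\Delta K}$ is simply connected; but then you stop short of the inevitable conclusion. A simply connected CW complex with vanishing reduced integral homology is contractible, by the Hurewicz and Whitehead theorems, so in fact $\cat(C_{\Delta K})=0$. Consequently no ``substitute invariant'' --- Massey products, Toda brackets, weak category on the Ganea fibration $\tilde p_1^{C_{\Delta K}}$, or higher homotopy obstructions --- can possibly detect $\cat(C_{\Delta K})\ge 2$: every such invariant vanishes on a contractible space, and the search you propose cannot succeed. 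Note that the paper's own proof never establishes a lower bound either; it only proves $\cat(C_{\Delta K})\le 2$ and cites \cite{GLO}, so the displayed ``$2=\cat(C_{\Delta K})$'' is not supported by the argument given there (and, by the observation above, is in fact false as an equality). What both you and the paper genuinely prove, and what the counterexample actually requires, is the strict inequality $\cat(C_{\Delta K})<TC(K)=4$, which already follows from your first two steps. So the fix is not to find a new lower-bound technique but to replace the claimed equality by $\cat(C_{\Delta K})=0$ (or simply by $\cat(C_{\Delta K})\le 2$), after which your argument is complete.
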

\begin{proof}
By Proposition~\ref{abelinization}, $\pi_1(C_{\Delta K})=H_1(H)=0$. Then by Theorem~\ref{cl}
$$\cat(C_{\Delta K})\le(\dim C_{\Delta K})/2=2.$$

The equality $TC(K(H,1))=4$ is a computation by Grant, Lupton and Oprea~\cite{GLO}.
\end{proof}

\section{Topological complexity of non-orientable surfaces}

Let $M$ be an orientable surface, $P=\R P^2$ be the projective plane, and let $q:M\vee P\to P$ denote the collapsing $M$ map.
We denote by $\mathcal O'=(q\times q)^*\mathcal O_{P\times P}$   the pull back
of the orientation sheaf on $P\times P$. 

\begin{prop}
The 4-dimensional homology group of $(M\vee P)^2$ with coefficients in $\mathcal O'$ equals $$H_4((M\vee P)^2;\mathcal O')=\Z\oplus\Z\oplus\Z\oplus\Z.$$ 

Moreover, the inclusions of manifolds $\xi_i:W_i\to(M\vee P)^2$, $i=1,\dots,4$ induce isomorphisms 
$H_4(W_i;\xi^*_i\mathcal O')\to H_4((M\vee P)^2;\mathcal O')$ onto the summands where
$W_1=M^2$, $W_2=P^2$, $W_3=M\times P$,
and $W_4=P\times M$. 
\end{prop}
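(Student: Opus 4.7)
The plan is to reduce this to a K\"unneth computation on $X \times X$ where $X = M \vee P$. Write $\mathcal L = q^* \mathcal O_P$ on $X$, so $\mathcal O' = \mathcal L \hat\otimes \mathcal L$. Since $q$ collapses $M$ to the basepoint, $\mathcal L|_M$ is the trivial system $\Z$ while $\mathcal L|_P = \mathcal O_P$; correspondingly, $\xi_i^* \mathcal O'$ is the appropriate orientation sheaf on each manifold $W_i$ (trivial on $W_1 = M^2$, equal to $\mathcal O_{P^2}$ on $W_2 = P^2$, and a mixed product sheaf on $W_3 = M \times P$ and $W_4 = P \times M$), so each $W_i$ carries a twisted fundamental class generating $H_4(W_i; \xi_i^* \mathcal O') = \Z$.

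First I would compute $H_*(X; \mathcal L)$ via the Mayer--Vietoris sequence for the open cover $X = U \cup V$, where $U$ is a neighborhood of $M$ deformation retracting onto $M$, $V$ is a neighborhood of $P$ deformation retracting onto $P$, and $U \cap V$ is contractible. Since $\mathcal L$ is globally defined the restrictions are compatible, and the sequence reads
\[
\cdots \to H_n(\pt) \to H_n(M;\Z) \oplus H_n(P;\mathcal O_P) \to H_n(X;\mathcal L) \to H_{n-1}(\pt) \to \cdots.
\]
The cellular chain-complex computation behind Proposition~\ref{even-odd}, truncated at $\R P^2$, gives $H_2(P; \mathcal O_P) = \Z$ and $H_1(P; \mathcal O_P) = 0$. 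Hence $H_2(X; \mathcal L) = \Z\langle j_{1*}[M]\rangle \oplus \Z\langle j_{2*}[P]\rangle$ and $H_1(X; \mathcal L) = H_1(M) = \Z^{2g}$ is free, where $j_1, j_2$ are the two wedge inclusions and $[M], [P]$ denote the respective twisted fundamental classes.

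Finally I apply the K\"unneth formula with local coefficients to $X \times X$ and $\mathcal L \hat\otimes \mathcal L$. Because $\dim X = 2$, only the $H_2 \otimes H_2$ summand contributes to total degree $4$; the Tor terms at total degree $3$ vanish since $H_1(X; \mathcal L)$ and $H_2(X; \mathcal L)$ are torsion-free. This yields
\[
H_4((M\vee P)^2; \mathcal O') \;\cong\; H_2(X; \mathcal L)^{\otimes 2} \;\cong\; \Z^4,
\]
with $\Z$-basis $j_{1*}[M] \otimes j_{1*}[M]$, $j_{1*}[M] \otimes j_{2*}[P]$, $j_{2*}[P] \otimes j_{1*}[M]$, $j_{2*}[P] \otimes j_{2*}[P]$. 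By naturality of the homology cross product these four classes are exactly the images under $(\xi_i)_*$ of the twisted fundamental classes of $W_1$, $W_3$, $W_4$, $W_2$ respectively, so each $(\xi_i)_*$ is an isomorphism onto one of the four summands. The only genuinely nontrivial ingredient is the twisted homology of $\R P^2$ supplied by Proposition~\ref{even-odd}; everything else is standard bookkeeping for local systems.
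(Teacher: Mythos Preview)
Your argument is correct and takes a genuinely different route from the paper. The paper decomposes the \emph{square} $(M\vee P)^2$ directly via Mayer--Vietoris into $A=M^2\cup P^2$ and $B=(M\times P)\cup(P\times M)$; since $A\cap B$ has dimension $\le 2$, the connecting terms in degrees $3$ and $4$ vanish and one reads off $H_4(A;\mathcal O'|_A)\oplus H_4(B;\mathcal O'|_B)\cong\Z^4$, each $\Z$ coming from one of the four closed $4$-manifolds $W_i$. You instead factor $\mathcal O'=\mathcal L\,\hat\otimes\,\mathcal L$ and apply the K\"unneth theorem with local coefficients (used elsewhere in the paper, e.g.\ in Proposition~\ref{tor}), after computing $H_*(M\vee P;\mathcal L)$ by Mayer--Vietoris on the \emph{wedge}. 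Your approach makes the second assertion---that the four $\Z$'s are hit by the $(\xi_i)_*$---essentially automatic via naturality of the cross product, whereas the paper leaves the verification that $\xi_i^*\mathcal O'$ is the orientation sheaf on $W_i$ to the subsequent corollary. Both arguments rely on the same low-dimensional vanishing, just organized differently: the paper's version is more geometric (one sees the four manifold pieces immediately), while yours is more algebraic and arguably cleaner for the ``moreover'' clause.
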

\begin{proof}
We note that from the Mayer-Vietoris exact sequence for the decomposition $(M\vee P)^2=A\cup B$ with $A=M^2\cup P^2$ and $B=(M\times P)\cup (P\times M)$,
$$
\cdots\to H_4(A;\mathcal O'|_A)\oplus H_4(B;\mathcal O'|_B)\stackrel{\psi}\to H_4((M\vee P)^2;\mathcal O')\to
H_3(M\vee M\vee P\vee P;\mathcal O'|_{\dots})
$$
and dimensional reasons it follows that $\psi$ is an isomorphism. Note that the intersections $M^2\cap P^2$ and $(M\times P)\cap(P\times M)$
in $(M\vee P)^2$ are singletons. Hence,
$A=M^2\vee P^2$ and $B=M\times P\vee P\times M$.
Thus, $\psi$ defines the required isomorphism.
\end{proof}
\begin{cor}
$$H_4((M\vee P)^2;\mathcal O')=H_4(M^2)\oplus H_4(P^2;\mathcal O_{P^2})\oplus H_4(M\times P;\mathcal O_{M\times P})\oplus H_4(P\times M;\mathcal O_{P\times M}).$$
\end{cor}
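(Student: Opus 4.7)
The plan is to read off the corollary directly from the Proposition, the only nontrivial step being the identification of each pulled-back sheaf $\xi_i^*\mathcal{O}'$ with the orientation sheaf of the submanifold $W_i$. Once those identifications are in place, the four summands in the Proposition become precisely the four twisted homology groups listed in the Corollary, and there is nothing more to do.

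First, I would expand $\mathcal{O}_{P\times P} = \mathcal{O}_P \hat{\otimes} \mathcal{O}_P$ and note that, by definition, $\xi_i^*\mathcal{O}' = (q\xi_i^{(1)})^*\mathcal{O}_P \hat{\otimes} (q\xi_i^{(2)})^*\mathcal{O}_P$, where $\xi_i^{(j)}$ is the projection of $W_i$ onto its $j$th factor followed by the inclusion into $M\vee P$. Since $q$ collapses $M$ to the basepoint and restricts to the identity on $P$, each factor pulls back either to a trivial $\Z$-system (when the factor of $W_i$ is $M$) or to $\mathcal{O}_P$ (when it is $P$). Combining these case by case:
\begin{itemize}
\item For $W_1=M^2$, both factors give trivial systems, so $\xi_1^*\mathcal{O}'$ is the constant $\Z$-sheaf and agrees with $\mathcal{O}_{M^2}$ by orientability of $M$.
\item For $W_2=P^2$, the map $q\times q$ restricts to the identity, so $\xi_2^*\mathcal{O}'=\mathcal{O}_{P^2}$.
\item For $W_3=M\times P$, one factor contributes trivially and the other contributes $\mathcal{O}_P$, yielding $\xi_3^*\mathcal{O}'=\mathcal{O}_M\hat{\otimes}\mathcal{O}_P=\mathcal{O}_{M\times P}$, again using that $M$ is orientable.
\item For $W_4=P\times M$, the same computation with the factors swapped gives $\xi_4^*\mathcal{O}'=\mathcal{O}_{P\times M}$.
\end{itemize}

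With these identifications, the isomorphisms from the Proposition become
\[
H_4(W_i;\xi_i^*\mathcal{O}') \;\cong\; H_4(W_i;\mathcal{O}_{W_i}),
\]
and summing the four contributions yields the stated decomposition. The only thing that could go wrong is a sign ambiguity in the identification of the sheaves, but since everything reduces to comparing pullbacks of $\mathcal{O}_P$ along either a constant or the identity, no subtlety arises. I do not anticipate any real obstacle here; the result is a bookkeeping exercise once the Proposition is granted.
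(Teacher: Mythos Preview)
Your proposal is correct and follows exactly the approach the paper takes: the paper's proof consists of the single sentence ``The proof is a verification that the restriction of $\mathcal O'$ to each $W_i$, $i=1,2,3,4$, is the orientation sheaf,'' and you have simply carried out that verification in detail.
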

\begin{proof}
The proof is a verification that the restriction of $\mathcal O'$ to each $W_i$, $i=1,2,3,4$, is the orientation sheaf.
\end{proof}

Let the map $f: M\# P\to (M\# P)/S^1=M\vee P$ be the collapsing of the connected sum circle. Note that the composition $q\circ f$ with the above $q$ takes the orientation sheaf $\mathcal O_P$
to the orientation sheaf $\mathcal O_{M\# P}$.
\begin{prop}\label{fundament}
$f_*([M\# P])=[M^2]+[P^2]+[M\times P]+[P\times M]$.
\end{prop}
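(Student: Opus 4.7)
The plan is to cut $(M\# P)^2$ along the two connected-sum circles into the four ``corner'' 4-manifolds with boundary
\[
W_1'=M_0^2,\quad W_2'=P_0^2,\quad W_3'=M_0\times P_0,\quad W_4'=P_0\times M_0,
\]
where $M_0=M\setminus\operatorname{int}D^2$ and $P_0=P\setminus\operatorname{int}D^2$, and then to track the twisted fundamental class through $f\times f$ piece by piece. The adjacent pieces meet along three-dimensional subsets of the form $S^1\times M_0$, $S^1\times P_0$, $M_0\times S^1$, $P_0\times S^1$, and all four meet in the torus $S^1\times S^1$; these are the preimages under $f\times f$ of the wedge-point axes in $(M\vee P)^2$.

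First I would choose a triangulation of $(M\# P)^2$ subordinate to this decomposition. With respect to such a triangulation, the fundamental class $[(M\# P)^2]\in H_4((M\# P)^2;\mathcal{O}_{(M\# P)^2})$ is represented by a 4-cycle that splits as a sum of four chains $c_1,\dots,c_4$, with $c_i$ supported in $W_i'$; the three-dimensional intersection pieces contribute nothing in top dimension. On each corner, $c_i$ is the relative fundamental class of $(W_i',\partial W_i')$ with coefficients in the restriction of $\mathcal{O}_{(M\# P)^2}$.

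Second I would note that $f|_{M_0}$ collapses $\partial M_0=S^1$ to the wedge point and factors as the quotient $M_0\to M_0/\partial M_0=M$, and likewise for $P_0$. Hence $(f\times f)|_{W_i'}$ is a pair map $(W_i',\partial W_i')\to(W_i,\mathrm{pt})$ collapsing the boundary, and pushes the relative fundamental class $c_i$ to the absolute fundamental class of $W_i$ via the identification of the preceding corollary; summing the four pushforwards gives the right-hand side.

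The main obstacle will be the sign and twisted-coefficient bookkeeping: since $\mathcal{O}'=(q\times q)^*\mathcal{O}_{P\times P}$ is only locally constant, I have to verify that the local orientations of the four corners glue coherently across the common torus $S^1\times S^1$, so that every summand on the right appears with coefficient $+1$ rather than some mixture of signs. This reduces to the identification $(f\times f)^*\mathcal{O}'=\mathcal{O}_{(M\# P)^2}$ (which follows from $(q\circ f)^*\mathcal{O}_P=\mathcal{O}_{M\# P}$ together with Proposition~\ref{pull-back}) and to the observation that on the orientable factor $M_0$ the sheaf is canonically trivial, while on $P_0$ it restricts to $\mathcal{O}_P$; fixing the $\pm1$-lifts compatibly on $M_0$ and $P_0$ forces the four signs to agree. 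Once this is in place, the four pushforwards $(f\times f)_*(c_i)$ match the four basis elements produced by the Mayer--Vietoris isomorphism of the preceding proposition, and the claimed equality follows.
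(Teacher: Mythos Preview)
Your proposal is correct and will work, but it takes a different route from the paper's argument. You proceed \emph{globally}: you decompose $(M\#P)^2$ into the four corner pieces $W_i'$, represent the twisted fundamental class as a sum of relative fundamental cycles $c_i$ of $(W_i',\partial W_i')$, and push each forward through the boundary-collapsing map to the absolute class $[W_i]$. The paper instead proceeds \emph{locally}: for each $i$ it picks a small $4$-ball $B\subset W_i\subset(M\vee P)^2$ and observes that the restriction homomorphism
\[
H_4((M\vee P)^2;\mathcal O')\longrightarrow H_4\bigl((M\vee P)^2,(M\vee P)^2\setminus\stackrel{\circ}B;\mathcal O'\bigr)\cong H_4(B,\partial B)=\Z
\]
is precisely the projection onto the $i$th summand. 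Since $f\times f$ is a homeomorphism over a neighbourhood of each such ball, the image $(f\times f)_*[(M\#P)^2]$ restricts to a generator of $H_4(B,\partial B)$ for every $i$, forcing each coordinate to be a fundamental class. Your approach has the virtue of being self-contained at the chain level and makes the sheaf compatibility explicit; the paper's approach avoids any triangulation or gluing bookkeeping and sidesteps the sign discussion entirely, since the local criterion for ``is a fundamental class'' is insensitive to how the pieces are glued.
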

\begin{proof}
Let $B\subset \xi(W_i)$ be a 4-ball. Then we claim that the homomorphism
$$
H_4((M\vee P)^2;\mathcal O')\to H^4((M\vee P)^2;(M\vee P)^2\setminus\stackrel{\circ}B;\mathcal O')=H_4(B,\partial B)=\Z
$$
generated by the map of pairs and the excision
is the projection of the direct sum $H_4((M\vee P)^2;\mathcal O')=\Z\oplus\Z\oplus\Z\oplus\Z$ onto the $i$th summand.
This follows from the commutative diagram
$$
\begin{CD}
H_4((M\vee P)^2;\mathcal O') @>>>H_4((M\vee P)^2;(M\vee P)^2\setminus\stackrel{\circ}B;\mathcal O')@<=<<H_4(B,\partial B)=\Z\\
@A\xi_iAA  @. @A=AA\\
H_4(W_i;\xi^*\mathcal O') @>=>> H_n(W_i,W_i\setminus \xi_i^{-1}(\stackrel{\circ}B);\xi^*\mathcal O')@<=<<H_4(B,\partial B)=\Z.\\
\end{CD}
$$

The commutative  diagram
$$
\begin{CD}
H_4(M\# P;\mathcal O_{M\# P}) @>=>> H_4(M\# P, M\# P\setminus\stackrel{\circ}B;\mathcal O_{M\# P})@<=<<H_4(B,\partial B)=\Z\\
@Vf\times fVV  @. @ V=VV\\
H_4((M\vee P)^2;\mathcal O') @>>>H_4((M\vee P)^2;(M\vee P)^2\setminus\stackrel{\circ}B;\mathcal O')@<=<<H_4(B,\partial B)=\Z\\
\end{CD}
$$
shows that the projection of the image $f_*([M\# P])$ of the fundamental class onto the $i$th summand, $i=1,2,3,4$, is a fundamental class.
\end{proof}

The proof of the  following proposition is straightforward.
\begin{prop}\label{retraction}
A retraction $r:X\to A$, $A\subset X$, defines a fiberwise retraction  $\bar r:(p^X)^{-1}(A) \to PA$. Moreover, for each $k$ it defines a fiberwise retraction
$\bar r_k:(p^X_k)^{-1}_X(A)\to\Delta_k(A)$ of the fiberwise joins:
$$
\begin{CD}
\Delta_k(X) @<\supset<<(p^X_k)^{-1}_X(A)@>\bar r_k>>\Delta_k(A)\\
@Vp^X_kVV @Vp^X_k|VV @Vp^A_kVV\\
X\times X @<\supset <<A\times A @>=>> A\times A.\\
\end{CD}
$$
\end{prop}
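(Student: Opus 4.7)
The plan is to define $\bar r$ by pointwise composition with the retraction $r$, and then extend the construction to the iterated fiberwise join by applying $\bar r$ to each path summand separately.

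First, for $k=0$, observe that $(p^X)^{-1}(A\times A)\subset PX$ consists of those paths $\phi\colon[0,1]\to X$ with both endpoints in $A$. I would set $\bar r(\phi)=r\circ\phi$. Since $r$ restricts to the identity on $A$, the endpoints are preserved, so $\bar r(\phi)\in PA$ and $p^A\circ\bar r=p^X|_{(p^X)^{-1}(A\times A)}$; continuity is immediate from continuity of composition in the compact-open topology. The retraction property is that for any $\psi\in PA\subset(p^X)^{-1}(A\times A)$ one has $\bar r(\psi)=r\circ\psi=\psi$.

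Next, for general $k$, recall that an element of $\Delta_k(X)$ is a formal combination $\sum_{i=0}^k t_i\phi_i$ with all $\phi_i$ sharing common endpoints in $X\times X$. An element of $(p^X_k)^{-1}(A\times A)$ is such a combination whose common endpoints happen to lie in $A\times A$; in particular each individual $\phi_i$ is then in $(p^X)^{-1}(A\times A)$. I would define
\[
\bar r_k\Bigl(\sum_{i=0}^k t_i\phi_i\Bigr)=\sum_{i=0}^k t_i\,\bar r(\phi_i)=\sum_{i=0}^k t_i\,(r\circ\phi_i).
\]
Because $r$ is applied uniformly to every $\phi_i$ and $r|_A=\id_A$, all the resulting paths $r\circ\phi_i$ share the same endpoints in $A$, so the expression is a legitimate point of $\Delta_k(A)$. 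Alternatively, $\bar r_k$ can be described invariantly as the iterated fiberwise join of the map $\bar r\colon(p^X)^{-1}(A\times A)\to PA$ with itself, covering the identity on $A\times A$, which automatically produces a map from $(p^X_k)^{-1}(A\times A)$ to $\Delta_k(A)=\ast^{k+1}_{A\times A}PA$.

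The commutativity of the two squares in the displayed diagram is then a direct check on formal sums: applying $p^X_k$ and then including into $X\times X$ gives the common endpoints of the $\phi_i$, while applying $\bar r_k$ and then $p^A_k$ gives the common endpoints of the $r\circ\phi_i$, which are the same points (already in $A$). The fiberwise retraction property $\bar r_k|_{\Delta_k(A)}=\id$ is inherited from the $k=0$ case, since $\bar r_k(\sum t_i\psi_i)=\sum t_i\psi_i$ whenever each $\psi_i$ already lies in $PA$. I do not anticipate any serious obstacle; the only thing to be a bit careful about is that the formula for $\bar r_k$ respects the identifications defining the fiberwise join (i.e.\ that dropping a summand with $t_i=0$ commutes with the construction), which is clear from the explicit formula.
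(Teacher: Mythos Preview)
Your proof is correct and is precisely the straightforward argument the paper has in mind; the paper itself omits the proof entirely, stating only that it is straightforward. Your explicit description $\bar r(\phi)=r\circ\phi$ and its coordinatewise extension to the fiberwise join is the intended construction.
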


\

We denote  $$g=(1\vee j)^2:(M\vee \R P^2)^2\to (M\vee\R P^{\infty})^2.$$
It is easy to see that the sheaf $\mathcal O'$ on $(M\vee \R P^2)^2$ is the pull back under $g$ of a sheaf $\tilde{\mathcal O}$ on $(M\vee\R P^{\infty})^2$
which comes from the pull back of the
tensor  product  $\mathcal O\hat\otimes\mathcal O$ of the canonical $Z$-bundles
on $\R P^{\infty}$.

\begin{prop}\label{3-cases}
Let $\kappa\in H^4((M\vee \R P^\infty)^2;\mathcal F)$ be the primary obstruction to 
a section of $$\bar p=p^{M\vee \R P^\infty}_3:\Delta_3(M\vee \R P^\infty)\to(M\vee \R P^\infty)^2.$$
Then 

(1) $[M^2]\cap g^*(\kappa)\ne 0$,

(2) $[(\R P^2)^2]\cap g^*(\kappa)=0$, and

(3) $([M\times \R P^2]+[\R P^2\times M])\cap g^*(\kappa)=0$.
\end{prop}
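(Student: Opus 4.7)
The plan is to identify $\kappa=\beta^4$ via Theorem~\ref{Schwarz2}, where $\beta$ is the primary obstruction to a section of the endpoint fibration $p^{M\vee\R P^\infty}:P(M\vee\R P^\infty)\to(M\vee\R P^\infty)^2$, and then handle each of the three claims using a different structural feature of the wedge $M\vee\R P^\infty$. Throughout, one uses that the fiber of $\bar p$ is $2$-connected (being a $4$-fold join of the non-empty space $\Omega(M\vee\R P^\infty)$), so on any $4$-dimensional subcomplex of $(M\vee\R P^\infty)^2$ the primary obstruction is the only obstruction to the existence of a section.

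For (1), I would apply Proposition~\ref{retraction} to the retraction $r:M\vee\R P^\infty\to M$ that collapses $\R P^\infty$ to the wedge point. The resulting fiberwise retraction $\bar r_3$ of $\bar p|_{M^2}$ onto $p^M_3$, together with the obvious fiberwise inclusion $\Delta_3(M)\hookrightarrow\bar p^{-1}(M^2)$, exhibits $p^M_3$ as a fiberwise retract of $\bar p|_{M^2}$; hence these two fibrations admit sections simultaneously. Since $M$ is an orientable surface with $TC(M)=4$, $p^M_3$ has no section, so $\kappa|_{M^2}\ne 0$. Poincare duality for the oriented $4$-manifold $M^2$ then gives $[M^2]\cap\kappa|_{M^2}\ne 0$, and since $g|_{M^2}$ is the inclusion into $(M\vee\R P^\infty)^2$, we obtain $[M^2]\cap g^*(\kappa)\ne 0$. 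For (2), I would dually invoke $TC(\R P^2)=3$ from Theorem~\ref{FTY}: a section of $p^{\R P^2}_3$, composed with the fiberwise map induced by $\R P^2\xrightarrow{j}\R P^\infty\hookrightarrow M\vee\R P^\infty$, yields a section of $\bar p$ over the image $(j\times j)((\R P^2)^2)$, which coincides with the image of $(\R P^2)^2\subset(M\vee\R P^2)^2$ under $g$. Consequently $g^*(\kappa)$ vanishes on $(\R P^2)^2$, and the cap with $[(\R P^2)^2]$ is zero.

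Part (3) is the main obstacle. The plan is to combine a flip-symmetry argument with a $2$-torsion computation. Let $\tau:(x,y)\mapsto(y,x)$ be the flip on $(M\vee\R P^\infty)^2$ and $\tau'$ the analogous flip on $(M\vee\R P^2)^2$, so that $g\circ\tau'=\tau\circ g$. Path reversal on $p^{M\vee\R P^\infty}$ induces $\tau^*\beta=-\beta$, hence $\tau^*\kappa=(-\beta)^4=\kappa$. Since the flip of two $2$-dimensional factors has degree $(-1)^{2\cdot 2}=1$ on fundamental classes, $\tau'_*[M\times\R P^2]=[\R P^2\times M]$ (after the canonical identification of orientation sheaves under $\tau'$). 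Combining these facts with naturality of the cap product and the fact that $\tau'_*$ is the identity on $H_0$ of a connected space, one obtains the equality $[M\times\R P^2]\cap g^*(\kappa)=[\R P^2\times M]\cap g^*(\kappa)$, so (3) reduces to showing that this common value is annihilated by $2$. I would verify this by tracing the coefficient system of $\kappa$ (which, via Theorem~\ref{universal}, is derived from the augmentation ideal of $\pi_1(M\vee\R P^\infty)$) and applying a Kunneth-type decomposition of $H^4(M\times\R P^2;\,\cdot\,)$ in the relevant twisted coefficients; the necessary $\Z_2$-torsion in the target originates from the $\R P^\infty$-factor via Proposition~\ref{even-odd}. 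The hardest step is this final coefficient bookkeeping: unwinding how the twisted coefficients interact under the Kunneth decomposition, and confirming that every relevant homology group picks up a $\Z_2$-factor from the $\R P^2$ side.
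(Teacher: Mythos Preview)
Your arguments for (1) and (2) are essentially the paper's: the retraction $M\vee\R P^\infty\to M$ together with Proposition~\ref{retraction} and $TC(M)=4$ for (1), and $TC(\R P^2)=3$ together with Corollary~\ref{lift} for (2).

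For (3), however, there are two genuine gaps.

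\emph{The symmetry step.} The identity $\tau^*\beta=-\beta$ is not correct as written. The class $\beta$ lives in $H^1$ with coefficients in (a local system built from) $I(\pi)$ with the $(\pi\times\pi)$-action $(a,b)\cdot m=amb^{-1}$, and path reversal covers $\tau$ but acts on the fiber $\Omega X$ by $g\mapsto g^{-1}$; on $I(\pi)$ this is the antipode, not multiplication by $-1$. So the formal manipulation $(\tau^*\beta)^4=(-\beta)^4$ is not available. The paper obtains the needed symmetry by a different, more robust device: it restricts attention to the $\sigma$-invariant subcomplex $Q=(M\times\R P^\infty)\vee(\R P^\infty\times M)$, observes that $\sigma$ acts freely on the cells of $Q$ away from the wedge vertex, and uses the $2$-connectedness of the fiber to build a $\sigma$-equivariant section over $Q^{(3)}$. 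This makes the obstruction \emph{cocycle} on $Q$ symmetric, so that $\kappa|_Q=q_0^*(\kappa')$ for the folding map $q_0:Q\to M\times\R P^\infty$ and $\kappa'=\kappa|_{M\times\R P^\infty}$.

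\emph{The $2$-torsion step.} You propose to locate the $2$-torsion via a K\"unneth decomposition of $H^4(M\times\R P^2;\,\cdot\,)$ while citing Proposition~\ref{even-odd}, which concerns $\R P^\infty$. This is the wrong space: on $\R P^2$ one has $H_2(\R P^2;\mathcal O_{\R P^2})=\Z$, so nothing is $2$-torsion there. The torsion only appears after pushing forward along $g$ (i.e.\ along $1\times j$) to $M\times\R P^\infty$, where $j_*[\R P^2]\in H_2(\R P^\infty;\mathcal O)=\Z_2$ and hence $2\,g_*[M\times\R P^2]=0$. The paper combines this with the folding map: $(q_0)_*\bigl((g_*[M\times\R P^2]+g_*[\R P^2\times M])\cap\kappa\bigr)=2\,g_*[M\times\R P^2]\cap\kappa'=0$, and then uses that $q_0$ and $g$ induce isomorphisms on $H_0$ with local coefficients to conclude. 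Your sketch does not make this push-forward explicit, and without it the ``coefficient bookkeeping'' you allude to cannot produce the needed vanishing.
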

\begin{proof}
(1) Assume that $[M^2]\cap g^*(\kappa)=0$. Then, $g_*([M^2])\cap \kappa=0$.
This means that the map $\bar p$ admits a section over $M^2\subset (M\vee \R P^\infty)^2$. The collapsing $\R P^\infty$ to a point defines
a retraction $r:M\vee \R P^\infty\to M$. By Proposition~\ref{retraction} the retraction $r$ defines a fiberwise retraction of 
$\bar p^{-1}(M^2)$ onto $\Delta_3(M)$.
This implies that $p^{M}_3:\Delta_3(M)\to M^2$ admits a section. Hence, by Theorem~\ref{ganea}, $TC(M)\le 3$. This contradicts to the fact that $TC(M)=4$.

(2) Since $TC(\R P^2)=3$,  by Corollary~\ref{lift} the map $g$ restricted to $(\R P^2)^2$ admits a lift with respect to $\bar p$.
Hence the primary
obstruction  $o'$ 
to such a lift is zero. Note that $o'=(g^*\kappa)|_{(\R P^2)^2}$ is the restriction to $(\R P^2)^2$ of the image of $\kappa$ under $g^*$.
Hence, $$[\R P^2]\cap g^*(\kappa)=[\R P^2]\cap(g^*\kappa)|_{(\R P^2)^2}=0.$$

(3) Let $\sigma:(M\vee \R P^\infty)^2\to (M\vee \R P^\infty)^2$ be the natural involution: $\sigma(x,y)=(y,x)$. 
We may assume that the map $\sigma$ is cellular.
It defines an involution $\bar\sigma$ on the path space $P(M\vee \R P^\infty)$ and 
involutions $\bar\sigma_k$ on the iterated fiberwise joins $\Delta_k(M\vee \R P^\infty)$.

Let $K=(M\vee \R P^\infty)^2$ be a $\sigma$-invariant CW complex structure with an invariant subcomplex 
$Q=(M\times \R P^\infty)\vee (\R P^\infty)\times M)$.
We claim that there is a section $s:K^{(3)}\to \Delta_3(M\vee \R P^\infty)$  which is $\sigma$-equivariant on $Q^{(3)}$. 
First we fix an invariant section at the wedge point $$s(x_0,x_0)=c_{x_0}+0+0+0\in \Delta_3(M\vee \R P^\infty)$$ where $c_{x_0}$ is the constant path at $x_0$. Then we define our section $s$ on
$Q^{(3)}$ by induction on dimension of simplices. 
We note that  $\sigma(e)\ne e$ for all cells in $Q$ except the wedge vertex.
Assume that an equivariant section $s$ is defined on the $i$-skeleton
$Q^{(i)}$, $i<3$. 
Then for all distinct pairs of $i$-cells $e,\sigma(e)$ we do an extension of $s$  to $e$ and define it on $\sigma(e)$ to be $\bar\sigma_3s\sigma$. 
Note that an extension of $s$ to $e$ exists since the fiber of $\bar p$ is 2-connected. Also, in
view of the 2-connectedness
of the fiber the section $s$ on $Q^{(3)}$ can be extended to $K^{(3)}$. 

Thus, we may assume that the restriction of the obstruction cocycle
to $Q$  is symmetric. Hence, for the obstruction cohomology class we obtain $(\sigma^*\kappa)|_Q=\sigma_0^*(\kappa|_Q)=\kappa|_Q$ where
$\sigma_0=\sigma|_Q$.

Let $$q_0:(M\times \R P^\infty)\vee (\R P^\infty\times M)\to M\times \R P^\infty$$ be the projection to the orbit space of the $\sigma$-action, i.e., the folding map. 
Let $\kappa'=\kappa|_{M\times \bA(\R P^2)}$.
Then $\kappa|_Q=q_0^*(\kappa')$. Note that $\tilde{\mathcal O}$ restricted to $(M\times \R P^\infty)\vee (\R P^\infty\times M)$ equals to 
$q_0^*\tilde{\mathcal O}|_{M\times \R P^\infty}$. Hence the homomorphism in homology $(q_0)_*$ is well-defined.
Since $q_0$ induces an epimorphism of the fundamental groups and takes both classes $g_*[M\times \R P^2]$
and $g_*[\R P^2\times M]$ to $g_*[M\times \R P^2]$, we obtain $$(q_0)_*(g_*[M\times \R P^2]+g_*[\R P^2\times M])\cap \kappa)=
2g_*[M\times \R P^2]\cap \kappa'=0.$$
The last equality follows from the fact that $[\R P^2]$ has order 2 in $\R P^\infty$ (see Proposition~\ref{tor}).

Since $q_0$ induces an epimorphism of the fundamental groups, we obtain $$(g_*[M\times \R P^2]+g_*[\R P^2\times M])\cap \kappa=0.$$
Therefore,  $([M\times \R P^2]+[\R P^2\times M])\cap g^*(\kappa)=0$.
\end{proof}
\begin{cor}\label{co}
$$([M^2]+[(\R P^2)^2]+[M\times \R P^2]+[\R P^2\times M])\cap g^*(\kappa)\ne 0.$$
\end{cor}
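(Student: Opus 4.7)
The statement is essentially an additive bookkeeping consequence of Proposition~\ref{3-cases}, so my plan is to simply combine its three parts. Specifically, since the cap product with a fixed class $g^*(\kappa)\in H^4((M\vee \R P^2)^2;\mathcal O')$ is a homomorphism $H_4((M\vee \R P^2)^2;\mathcal O')\to H_0((M\vee \R P^2)^2;\ldots)$, I can distribute it over the sum
\[
[M^2]+[(\R P^2)^2]+[M\times \R P^2]+[\R P^2\times M].
\]
By part (2) of Proposition~\ref{3-cases} the second summand vanishes, and by part (3) the sum of the last two vanishes, so the total cap product reduces to $[M^2]\cap g^*(\kappa)$, which is nonzero by part (1).

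The only subtlety worth mentioning is that for the splitting argument to make sense, the sheaf $\mathcal O'$ must restrict to the orientation sheaf of each factor $W_i$, but this is precisely the content of the corollary to the Mayer--Vietoris decomposition of $(M\vee P)^2$ given earlier in the section, so each individual cap product $[W_i]\cap g^*(\kappa)$ is well-defined as an element of the same target group via the inclusion $\xi_i:W_i\hookrightarrow(M\vee P)^2$.

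Since no step of this argument requires any further geometric or homological input beyond Proposition~\ref{3-cases} itself, there is no real obstacle; the proof is a one-line additivity check. The substantive work has already been carried out in parts (1)--(3) of Proposition~\ref{3-cases}, where the hard input is the nonvanishing in (1) (which rests on $TC(M)=4$) and the symmetrization-plus-folding argument in (3).
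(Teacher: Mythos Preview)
Your proposal is correct and matches the paper's approach: the paper states this corollary without proof, treating it as an immediate consequence of parts (1)--(3) of Proposition~\ref{3-cases} via the additivity of the cap product, exactly as you describe.
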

\begin{cor}\label{co2}
$$g_*([M^2]+[(\R P^2)^2]+[M\times \R P^2]+[\R P^2\times M])\cap \kappa\ne 0.$$
\end{cor}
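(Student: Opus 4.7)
The plan is to derive Corollary \ref{co2} from Corollary \ref{co} by the standard naturality of the cap product. Writing $\alpha = [M^2]+[(\R P^2)^2]+[M\times \R P^2]+[\R P^2\times M]$, the naturality formula gives
$$
g_*\bigl(\alpha\cap g^*(\kappa)\bigr)\;=\;g_*(\alpha)\cap \kappa
$$
in $H_0\bigl((M\vee\R P^\infty)^2;\mathcal{F}\bigr)$ (with $\mathcal F$ the appropriate coefficient sheaf on the target). So the only thing I need is that $g_*$ does not kill the class $\alpha\cap g^*(\kappa)\in H_0\bigl((M\vee\R P^2)^2;g^*\mathcal F\bigr)$, which is nonzero by Corollary \ref{co}.

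The key observation for this is that $g=(1\vee j)^2$, where $j\colon\R P^2\to\R P^\infty$ induces an isomorphism on $\pi_1$. Hence $1\vee j$ induces an isomorphism on fundamental groups (the inclusions of the wedge summands into $M\vee\R P^\infty$ and $M\vee\R P^2$ identify both $\pi_1$'s with the free product $\pi_1(M)*\Z_2$), and consequently so does $g$ on the fundamental group of the product. Since zero-dimensional twisted homology of a connected space with coefficients in a local system $L$ is just the module of coinvariants $L_{\pi_1}$, and since $g^*\mathcal F$ has the same stalk as $\mathcal F$ with the $\pi_1\bigl((M\vee\R P^2)^2\bigr)$-action obtained by pullback, the induced map
$$
g_*\colon H_0\bigl((M\vee\R P^2)^2;g^*\mathcal F\bigr)\longrightarrow H_0\bigl((M\vee\R P^\infty)^2;\mathcal F\bigr)
$$
is an isomorphism.

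Combining, Corollary \ref{co} gives $\alpha\cap g^*(\kappa)\ne 0$, the displayed isomorphism of $H_0$'s then forces $g_*\bigl(\alpha\cap g^*(\kappa)\bigr)\ne 0$, and naturality of cap product identifies this with $g_*(\alpha)\cap\kappa$, finishing the proof. The only step that could cause trouble is making sure the coefficient bookkeeping is correct---that $\mathcal O'=g^*\tilde{\mathcal O}$ lets $g_*$ be defined on the relevant twisted homology, and that $\kappa$ indeed lives in a cohomology group with coefficients for which this cap product pairs against $g_*(\alpha)$---but both of these were already arranged in the preamble to Proposition \ref{3-cases}, so this is purely a matter of unpacking definitions rather than a real obstacle.
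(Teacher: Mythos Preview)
Your proof is correct and follows exactly the same approach as the paper: apply naturality of the cap product $g_*(\alpha\cap g^*(\kappa))=g_*(\alpha)\cap\kappa$, then use that $g_*$ is an isomorphism on $H_0$ to transfer the nonvanishing from Corollary~\ref{co}. You supply more detail than the paper (which simply asserts that $g_*$ is an isomorphism in dimension~$0$), but the argument is the same.
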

\begin{proof} 
First we recall that $g_*$ is well-defined
Note that
$g_*([M^2]+[(\R P^2)^2]+[M\times \R P^2]+[\R P^2\times M])\cap g^*(\kappa))=g_*([M^2]+[(\R P^2)^2]+[M\times \R P^2]+[\R P^2\times M])\cap \kappa.$ Since $g_*$ is an isomorphism in dimension 0, we derive the result from Corollary~\ref{co}.
\end{proof}

\begin{thm}
For $g\ge 4$, $TC(N_g)=4$.
\end{thm}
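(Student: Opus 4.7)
The plan is to establish the lower bound $TC(N_g)\ge 4$, since the matching upper bound is classical. I would argue by contradiction, extracting a non-vanishing primary obstruction to a section of the third Schwarz--Ganea fibration from the cap-product content of Corollary~\ref{co2}.

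For $g\ge 5$, choose a decomposition $N_g = M\#\R P^2$ with $M$ an orientable surface of genus $\ge 2$, so that $TC(M) = 4$ by Farber's theorem and all the hypotheses of Proposition~\ref{3-cases} are in force ($g$ odd uses $M = M_{(g-1)/2}$; for $g$ even one runs the analogous argument with the decomposition $N_g = M_{(g-2)/2}\# K$ and $K$ replacing one $\R P^2$ factor throughout the Section~4 machinery). Let $f\colon N_g\to M\vee\R P^2$ be the collapsing of the connecting circle and set
$$
h \;=\; (1_M\vee j)\circ f\colon N_g\longrightarrow M\vee\R P^\infty,
$$
where $j\colon \R P^2\to\bA(\R P^2)\simeq\R P^\infty$ is the canonical inclusion.

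Assume for contradiction that $TC(N_g)\le 3$. By Theorem~\ref{ganea}(1) the fibration $p_3^{N_g}$ admits a section $\sigma$, and composing $\sigma$ with the functorial map $\Delta_3(N_g)\to\Delta_3(M\vee\R P^\infty)$ produces a lift of $h\times h$ through $p_3^{M\vee\R P^\infty}$. Since the fiber of the latter is $2$-connected, the existence of such a lift forces the pullback $(h\times h)^*(\kappa)$ of the primary obstruction to vanish in the relevant twisted $H^4(N_g\times N_g)$. The projection formula converts this into
$$
(h\times h)_*\bigl([N_g\times N_g]\cap(h\times h)^*(\kappa)\bigr) \;=\; (h\times h)_*([N_g\times N_g])\cap\kappa,
$$
and, factoring $h\times h = g\circ(f\times f)$, Proposition~\ref{fundament} rewrites the right-hand side as
$$
g_*\bigl([M^2]+[\R P^2\times\R P^2]+[M\times\R P^2]+[\R P^2\times M]\bigr)\cap\kappa,
$$
which is non-zero by Corollary~\ref{co2}. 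Since $h\times h$ is surjective on fundamental groups (the collapsing map $f$ is surjective on $\pi_1$ and $j$ induces an isomorphism on $\pi_1$), the argument of Proposition~\ref{pseudo} gives that $(h\times h)_*$ is an isomorphism on $H_0$ with the relevant twisted coefficients; hence $(h\times h)^*(\kappa)\ne 0$, contradicting the assumed section.

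The main obstacle is the endpoint $g = 4$, where no decomposition $N_4 = M\#\R P^2$ (nor $N_4 = M\# K$) exists with $M$ an orientable surface of $TC$ equal to $4$---only $T^2$ is available, and $TC(T^2) = 2$ voids the retract step in Proposition~\ref{3-cases}(1). To treat $g=4$ I would enlarge the wedge, for instance running the argument with the collapse $N_4 \to T^2\vee\R P^2\vee\R P^2$ and recombining the restriction arguments of Proposition~\ref{3-cases}(2) across the two $\R P^2$ factors (using $TC(\R P^2)=3$ twice) together with a $T^2$-contribution; verifying that the resulting cap-product pairing against the fundamental class survives across all mixed summands of $H_4$ is the step I expect to be the main technical difficulty.
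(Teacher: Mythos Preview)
Your treatment of odd $g\ge 5$ is essentially the paper's argument. Two real gaps remain.

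\medskip
\textbf{Even $g\ge 6$.} Your parenthetical ``run the analogous argument with $N_g=M_{(g-2)/2}\# K$ and $K$ replacing $\R P^2$'' does not go through as stated: the analogue of Proposition~\ref{3-cases}(2), namely $[K^2]\cap g^*(\kappa)=0$, would require $TC(K)\le 3$, which is precisely one of the open cases. The paper sidesteps this entirely: for even $g>4$ it writes $N_g=M\#\R P^2\#\R P^2$, collapses to $M\vee\R P^2\vee\R P^2$, and then \emph{folds} the two $\R P^2$ summands together to land in $M\vee\R P^2$. This pushes the orientation sheaf forward correctly and makes the odd-$g$ argument apply verbatim; no information about $TC(K)$ is needed.

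\medskip
\textbf{The case $g=4$.} Your proposed collapse $N_4\to T^2\vee\R P^2\vee\R P^2$ is not just technically delicate --- it appears to fail outright. With this target every ``pure'' summand contributes zero ($TC(T^2)=2$, $TC(\R P^2)=3$), and every mixed pair $[X\times Y]+[Y\times X]$ reduces, after the symmetry/folding argument of Proposition~\ref{3-cases}(3), to twice a class which is $2$-torsion (since $j_*[\R P^2]$ has order $2$ in $H_2(\R P^\infty;\mathcal O)$). So the whole cap product collapses; there is no surviving term to carry the contradiction.

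The paper's $g=4$ argument is genuinely different. It uses $N_4=T\# K$ and $f\colon N_4\to T\vee K$, and case-splits on the unknown value of $TC(K)$. If $TC(K)=4$ one simply collapses $N_4\to K$ and uses Poincar\'e duality on $K^2$. If $TC(K)=3$, the roles in Proposition~\ref{3-cases} invert: items (1) and (2) vanish, and the nonzero contribution must come from (3), i.e.\ from $[T\times K]+[K\times T]$. The key new idea is that $T\times K$ is \emph{aspherical}, so the restricted obstruction $\kappa'$ on $T\times K$ is (the image of) the fourth power $\beta^4$ of the Berstein--Schwarz class; by the universality theorem (Theorem~\ref{universal}) and $H^4(T\times K;\mathcal O_{T\times K})=\Z$, this $\kappa'$ has \emph{infinite order}, hence $2\kappa'\ne 0$ and the mixed term survives. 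This infinite-order step is the heart of the $g=4$ proof and is absent from your proposal.
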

\begin{proof} First we consider the case when $g$ is odd. Then $N_g=M\#\R P^2$ for an orientable surface $M$ of genus $>1$.
Let $f:N_g=M\#\R P^2\to M\vee\R P^2$ be a map that collapses the connected sum circle.
Clearly, $f$ induces an epimorphism of the fundamental groups. Note that
the orientation sheaf $\mathcal O_{N_g}$ is the pull back $f^*q^*\mathcal O_{\R P^2}$ where $q:M\vee\R P^2\to\R P^2$ is the collapsing map.

We show that the map $g\circ(f\times f)=(1\vee j)f\times(1\vee j)f$ does not admit a lift with respect to
 $$\bar p=p^{M\vee\R P^\infty}_3:\Delta_3(M\vee \R P^\infty)\to(M\vee \R P^\infty)^2.$$
Then, by Corollary~\ref{lift} we obtain the inequality $TC(N_g)\ge 4$.

The primary obstruction $o$ to such a lift is the image $(f\times f)^*g^*(\kappa)$ of the primary obstruction to a section. Note that by Proposition~\ref{fundament} and Corollary~\ref{co2},
$$g_*(f\times f)_*([N_g^2]\cap o)=g_*(f\times f)_*([N_g^2])\cap\kappa=g_*([M^2]+[P^2]+[M\times P]+[P\times M])\cap\kappa\ne 0$$
where $P=\R P^2$.
Therefore, $[N_g^2]\cap o\ne 0$. By the Poincare duality (with local coefficients) we obtain that $o\ne 0$.

When $g>4$ is even, $N_g=M\#\R P^2\#\R P^2$ for an orientable surface $M$ of genus $>1$. We consider the map 
$f:N_g\to M\vee\R P^2$ which is the composition of the quotient map $N_g\to M\vee\R P^2\vee\R P^2$ and union of the folding
map $\R P^2\vee\R P^2\to\R P^2$ and the identity map on $M$. For such $f$ the orientation sheaf on $N_g$ can be pushed forward and the above argument works.

The proof of the case $g=4$ differs from the above by the following. In this cases $N_g=T\#K$ where $T$ is a 2-torus and $K$ is  
the Klein bottle $K$. We consider $f:N_g\to T\vee K$ and show that $f\times f$ does not admit a lift with respect to
$$\bar p=p^{T\vee K}_3:\Delta_3(T\vee K)\to(T\vee K)^2.$$
Let $\kappa\in H^4((M\vee \R P^\infty)^2;\mathcal F')$ be the primary obstruction to 
a section of $\bar p$.
If for the Klein bottle $TC(K)=3$ we make
the following modification of Proposition~\ref{3-cases}: 

(1) $[T^2]\cap\kappa=0$, 

(2) $[K^2]\cap\kappa=0$, 

(3)
$([T\times K]+[K\times T])\cap\kappa\ne 0$. 

The first two conditions follows from the facts that $TC(T)=2$ and the assumption $TC(K)<4$.

Here we prove (3). By the argument of Proposition~\ref{3-cases} we obtain that $$([T\times K]+[K\times T])\cap\kappa=[T\times K]\cap 2\kappa'$$
where $\kappa'$ is the restriction of $\kappa$ to $T\times K$. Since $T\times K$ is aspherical, the fibration $\tilde p^{T\times K}$
is fiberwise homotopy equivalent to the universal covering of $T\times K$. Therefore, by Theorem~\ref{Schwarz2}, the primary obstruction to a section of
the 3rd Ganea fibration $\tilde p^{T\times K}_3$ equals $\beta^4$ where $\beta$ is  the Berstein-Schwarz class for $\pi_1(T\times K)$. 
We note that 4-dimensional cohomology group of $T\times K$ with coefficients in  the orientation sheaf equals $\Z$.
Then the universality of $\beta$ (Theorem~\ref{universal})
implies that $\beta^4$ has infinite order. Let the base point of $(T\times K)$ be $(v_0,v_0)$ where $v_0$ is the wedge point in $T\vee K$.
We observe that the fibration $\tilde p^{T\times K}$ embeds into $p^{T\vee K}$ by means of the following map
$\Psi:P_0(T\times K)\to P(T\vee K)$: $\Psi(\phi)=\overline{\phi_T}\phi_K$ where $\phi=(\phi_T,\phi_K)$, $\overline{\phi_T}$ is the reverse path
and $\overline{\phi_T}\phi_K$ is the concatenation. The fibration $P(T\vee K)$ restricted over $T\times K\subset (T\vee K)^2$ admits a fiberwise retraction
onto $\Psi(P_0(T\times K))$ defined as $r(\phi)=(\overline{q_T\circ\phi},q_K\circ\phi)$ where $\phi:[0,1]\to T\vee K$ is a path that starts in $T$
and terminates in $K$, $q_T:T\vee K\to T$ and $q_K:T\vee K\to K$ are the collapsing maps. Then
the 3rd Ganea fibration embeds into $\bar p$ as a fiberwise retract. This retraction defines a split monomorphism
of the coefficients sheafs from the definition of the primary obstructions. The naturality of the obstructions implies that
$\kappa'$ is the image of $\beta^4$ under a monomorphism. Therefore, $\kappa'$ has an infinite order. Thus, $2\kappa'\ne 0$ and by
the Poincare duality we obtain (3). 

Using the above modification of Proposition~\ref{3-cases} we obtain
$$(f\times f)_*([N_g^2]\cap o)=(f\times f)_*([N_g^2])\cap\kappa=([T^2]+[K^2]+[T\times K]+[K\times T])\cap\kappa\ne 0.$$
Therefore, $[N_g]\cap o\ne 0$. By the Poincare duality  with local coefficients we obtain $o\ne 0$.

If $TC(K)=4$, then we show that the square of the map $f\circ q_K:K\# T\to K$ does not have a lift with respect to
$p_3^K:\Delta_3(K)\to K^2$. Indeed, since $(f\circ q_K)_*([K\# T])=[K]$, the primary obstruction to such a lift $\kappa'=((f\circ q_k)^2)^*(\kappa)$ evaluated on the fundamental class  $[K\# T)^2]$ equlas $\kappa$ evaluated on the fundamental class $[K^2]$ where $\kappa$ is the obstruction to a section of $p^K_3$. Since $TC(K)>3$, we have $\kappa\ne 0$.
By the Poicare duality $[K^2]\cap\kappa\ne 0$. Hence, $\kappa'\ne 0$ and the result follows. 
\end{proof}

REMARK 1. Implicitly our proof of the inequality $TC(N_g)\ge 4$ is based on the zero-divisors cup-length estimate as in~\cite{F}.
Indeed, by Schwarz' theorem~\ref{Schwarz2}, $\kappa=\beta^4$ where 
$\beta\in H^1((M\vee\R P^\infty)^2;\mathcal F_0)$ is the primary obstruction for the section of the fibration
$$p^{M\vee\R P^\infty}:P(M\vee\R P^\infty)\to (M\vee\R P^\infty)^2.$$
Therefore, $\beta$ has the restriction to the diagonal equal zero. We have proved that $\alpha^4\ne 0$ for
the element $\alpha=(f\times f)^*g^*(\beta)\in H^1(N_g\times N_g;\mathcal F_0^*)$ which is trivial on the diagonal.
The local coefficient system $\mathcal F_0^*$ as well as a cocycle $a$ 
representing $\alpha$ can be presented explicitly  (in terms of $\pi_1(N_g)$-modules and cross homomorphisms) as it it was done in~\cite{C}.

REMARK 2. The above technique  does not seem to be applicable 
to $N_3=\R P^2\#\R P^2\#\R P^2$.

\end{document}